\theoremstyle{plain}
\newtheorem{theorem}{Theorem}[section]
\newtheorem{corollary}[theorem]{Corollary}
\newtheorem{proposition}[theorem]{Proposition}
\newtheorem{lemma}[theorem]{Lemma}
\theoremstyle{definition}
\newtheorem{definition}[theorem]{Definition}
\newtheorem{claim}[theorem]{Claim}
\newtheorem{example}[theorem]{Example}
\newtheorem*{theorem1}{Theorem 5.5}
\theoremstyle{remark}
\newtheorem{remark}[theorem]{Remark}
\numberwithin{equation}{section}\theoremstyle{plain}
\newcommand{\I}{\mathcal{I}}
\renewcommand{\1}{\textbf{1}}
\newcommand{\A}{{\mathcal A}}
\newcommand{\B}{{\mathcal B}}
\newcommand{\C}{{\mathcal C}}
\newcommand{\D}{{\mathcal D}}
\newcommand{\Z}{{\mathcal Z}}
\newcommand{\Zz}{{\mathbb Z}}
\newcommand{\Ii}{\mathfrak{I}}
\newcommand{\E}{{\mathcal E}}
\newcommand{\Rep}{\operatorname{Rep}}
\newcommand{\cd}{\mathrm{cd}}
\newcommand\Aut{\operatorname{Aut}}
\newcommand\Irr{\operatorname{Irr}}
\newcommand\FPdim{\operatorname{FPdim}}
\newcommand\vect{\operatorname{Vec}}
\newcommand\svect{\operatorname{sVec}}
\newcommand\id{\operatorname{id}}
\newcommand\Hom{\operatorname{Hom}}
\begin{document}
\title{A class of prime fusion categories of dimension $2^N$}	

\author{Jingcheng Dong}
\address{College of Mathematics and Statistics, Nanjing University of Information Science and Technology, Nanjing 210044, China}
\email{jcdong@nuist.edu.cn}

\author{Sonia Natale}
\address{Facultad de Matem\' atica, Astronom\' \i a, F\' \i sica y Computaci\' on,
Universidad Nacional de C\' ordoba, CIEM -- CONICET, (5000) Ciudad
Universitaria, C\' ordoba, Argentina}
\email{natale@famaf.unc.edu.ar \newline \indent \emph{URL:}\/
http://www.famaf.unc.edu.ar/$\sim$natale}

\author{Hua Sun}
\address{Department of Mathematics, Yangzhou University, Yangzhou, Jiangsu 225002, China}
\email{d160028@yzu.edu.cn}

\keywords{Fusion category; braided fusion category; group extension; Ising category}

\subjclass[2010]{18D10}

\date{October 21, 2019}

\begin{abstract} We study a class of strictly weakly integral fusion categories $\mathfrak{I}_{N, \zeta}$, where $N \geq 1$ is a natural number and $\zeta$ is a $2^N$th root of unity, that we call $N$-Ising fusion categories. An $N$-Ising fusion category has Frobenius-Perron dimension $2^{N+1}$ and is a graded extension of a pointed fusion category of rank 2 by the cyclic group of order $\mathbb Z_{2^N}$. We show that every braided $N$-Ising fusion category is prime and also that there exists a slightly degenerate $N$-Ising braided fusion category for all $N > 2$. We also prove a structure result for braided extensions of a rank 2 pointed fusion category in terms of braided $N$-Ising fusion categories.
\end{abstract}

\maketitle

\section{Introduction}\label{sec1}

Among the most basic examples of fusion categories, the \emph{pointed fusion categories} are those whose simple objects are invertible. A pointed fusion category is determined by its group of invertible objects $G$ and the cohomology class of a 3-cocycle $\omega$ on $G$, who is responsible for the associativity constraint. We denote by $\vect_{G}^\omega$ the pointed fusion category associated to the pair $(G, \omega)$.

\medbreak
Let $G$ be a finite group. A fusion category $\C$ is called a \emph{$G$-extension} of a fusion category $\D$ if it admits a faithful grading by the group $G$,
$$\C=\oplus_{g\in G}\C_g,$$
such that $\C_g \otimes \C_h \subseteq \C_{gh}$, for all $g, h \in G$, and the trivial homogeneous component is equivalent to $\D$ \cite{gelaki2008nilpotent}.
Thus, a fusion category $\C$ is pointed if and  only if $\C$ is a $G$-extension of the fusion category $\vect$ of finite dimensional vector spaces, for some finite group $G$.

\medbreak
An \emph{Ising category} is a fusion category of Frobenius-Perron dimension $4$ which is not pointed. Ising categories appear in Conformal Field Theory related to 2-dimensional Ising models.

Every Ising fusion category is a $\Zz_2$-extension of the rank 2 pointed fusion category $\vect_{\Zz_{2}}$ and it belongs to the class of fusion categories classified by Tambara and Yamagami in  \cite{TY}; in particular there exist exactly 2 Ising fusion categories, up to equivalence, and they are a 3-cocycle twist of each other.

By the main result of \cite{siehler-braided}, every Ising fusion category admits exactly 4 non-equivalent braidings. In particular all such braidings are non-degenerate. Several properties of Ising fusion categories are studied in \cite[Appendix B]{drinfeld2010braided}.  See Subsection \ref{sec2.3}.

\medbreak
In this paper we study a family of examples of fusion categories that are obtained from Ising fusion categories and share some features with them. We call them \emph{$N$-Ising fusion categories}. They are special instances of the cyclic extensions of adjoint categories of ADE type classified in \cite{EM} and are defined as follows:
Let $\Ii$ be the semisimplification of the representation category of $U_{-q}(\mathfrak{sl}_2)$, with $q = \exp(i\pi/4)$. Then $\Ii$ is an Ising fusion category. Let $Z$ be the non-invertible simple object of $\Ii$. Then an $N$-Ising category is defined as a 3-cocycle twist of the fusion subcategory of $\Ii \boxtimes \vect_{\Zz_{2^N}}$ generated by the simple object $Z \boxtimes 1$; c.f. Section \ref{nising}. (The definition of a 3-cocycle twist of a group-graded fusion category is recalled in Subsection \ref{sec2.2}.)

\medbreak
A $1$-Ising fusion category is thus an Ising fusion category. For every $N\geq 1$, an $N$-Ising fusion category has Frobenius-Perron dimension $2^{N+1}$ and is a graded extension of a pointed fusion category of rank 2 by the cyclic group of order $\mathbb Z_{2^N}$. In addition every $N$-Ising fusion category is strictly weakly integral. Its group of invertible objects is isomorphic to $\Zz_2 \times \Zz_{2^{N-1}}$ and it has $2^{N-1}$ simple objects of Frobenius-Perron dimension $\sqrt 2$, none of which is self-dual except in the case $N = 1$.

\medbreak
As graded extensions of $\vect_{\Zz_2}$, $N$-Ising fusion categories are parameterized by the integer $N$ and a $2^N$th root of unity $\zeta$. The corresponding category is denoted $\Ii_{N, \zeta}$. We use the notation $\Ii_N$ to indicate the category $\Ii_{N, 1}$.

Every $N$-Ising fusion category $\Ii_{N, \pm 1}$ admits the structure of a braided fusion category.  We show that a braided $N$-Ising fusion category is always prime (Corollary \ref{in-prime}), that is, it does not contain any non-trivial non-degenerate fusion subcategory.
We also show that with respect to any possible braiding, an $N$-Ising fusion category is non-degenerate if and only if $N = 1$. In addition, a slightly degenerate braided $N$-Ising category exists if $N > 2$. See Subsection \ref{braided-nisings}.

Observe that, as shown \cite{EM}, when $N \geq 2$ there is another family of non-pointed $\Zz_{2^N}$-extensions  of $\vect_{\Zz_2}$, not equivalent to any $N$-Ising fusion category. However, the fusion categories in this family do not admit any braiding (Theorem \ref{braided-cyclic}).

\medbreak
Our main result for braided extensions of a rank 2 pointed fusion category  is the following theorem:

\begin{theorem1}Let $\C$ be a non-pointed braided fusion category and suppose that $\C$ is an extension of a rank 2 pointed fusion category. Then $\C$ is equivalent as a fusion category to $\I_{N} \boxtimes \B$, for some $N\geq 1$, where $\I_N$ is a braided $N$-Ising fusion category, and $\B$ is a pointed braided fusion category.
Furthermore, the categories $\I_{N}$ and $\B$ projectively centralize each other in $\C$.
\end{theorem1}

The notion of projective centralizer of a fusion subcategory, introduced in \cite{drinfeld2010braided}, is recalled in Subsection \ref{sec2.2}.

\medbreak
Theorem \ref{cls-extensions-vec2} is proved in Section \ref{structure}. Its proof relies on the classification results of \cite{EM}.
We point out that Theorem \ref{cls-extensions-vec2} applies in particular when $\C$ is a slightly degenerate braided fusion category with generalized Tambara-Yamagami fusion rules, that is, when $\C$ is slightly degenerate, not pointed, and the tensor product of two non-invertible simple objects decomposes as a sum of invertible objects.

\medbreak The paper is organized as follows. In Section \ref{sec2} we discuss some preliminary notions and results on fusion categories that will be relevant in the rest of the paper. Section \ref{sec42} contains some basic results on the structure of a general group extension of a rank 2 pointed fusion category and on braided such extensions that will be needed in the sequel. In Section \ref{nising} we introduce $N$-Ising categories and study their main properties.
In Section \ref{structure} we give a proof of our main result on braided extensions of a rank 2 pointed fusion category.

\section{Preliminaries}\label{sec2}
We shall work over an algebraically closed field $k$ of characteristic zero. A fusion category over $k$ is a $k$-linear semisimple rigid tensor category with finitely many isomorphism classes of simple objects, finite-dimensional vector spaces of morphisms and the unit object $\1$ is simple. We refer the reader to \cite{etingof2005fusion}, \cite{drinfeld2010braided} for the main notions on fusion categories and braided fusion categories used throughout.

\medbreak
Let $\C$ be a fusion category. The tensor product in $\C$ induces a ring structure in the Grothendieck ring $K(\C)$ of $\C$. By \cite[Section 8]{etingof2005fusion}, there is a unique ring homomorphism $\FPdim:K(\C)\to\mathbb{R}$ such that $\FPdim(X)\geq 1$ for all nonzero $X\in \C$. The number $\FPdim(X)$ is called the Frobenius-Perron dimension of $X$. The Frobenius-Perron dimension of $\C$ is defined by
$$\FPdim(\C)=\sum_{X\in\Irr(\C)}\FPdim(X)^2,$$
where $\Irr(\C)$ is the set of isomorphism classes of simple objects in $\C$.

\medbreak
A simple object $X\in \C$ is called invertible if $X\otimes X^*\cong \1$, where $X^*$ is the dual of $X$. Thus $X$ is invertible if and only if $\FPdim(X)=1$. A fusion category $\C$ is called pointed if every simple object of $\C$ is invertible. Pointed fusion categories whose group of invertible objects is isomorphic to $G$ are classified by the orbits of the action of the group $\textrm{Out}(G)$ in $H^3(G,k^\times)$. The pointed fusion category corresponding to the class of a 3-cocycle $\omega$ will be denoted by $\vect_{G}^{\omega}$.

The largest pointed subcategory of $\C$, denoted $\C_{pt}$, is the fusion subcategory generated by all invertible simple objects.
The set $G = G(\C)$ of isomorphism classes of invertible objects of $\C$ is a finite group with multiplication given by tensor product. The inverse of $X\in G$ is its dual $X^*$.
The group $G$ acts on the set $\Irr(\C)$ by left tensor product multiplication. Let $G[X]$ be the stabilizer of $X\in \Irr(\C)$ under this action. Then we have a decomposition
\begin{equation}\label{decom1}
\begin{split}
X\otimes X^*=\bigoplus_{g\in G[X]}g\oplus\sum_{Y\in \Irr(\C)-G[X]}  \dim\Hom(Y,X\otimes X^*) \; Y.
\end{split}
\end{equation}

\subsection{Group extensions of fusion categories}\label{sec2.2}
Let $G$ be a finite group. A fusion category $\C$ is graded by $G$  if $\C$ has a direct sum decomposition into full abelian subcategories $\C=\oplus_{g\in G}\C_g$ such that  $\C_g\otimes\C_h\subseteq\C_{gh}$, for all $g,h\in G$. If $\C_g\neq 0$, for all $g\in G$, then the grading is called faithful. When the grading is faithful, $\C$ is called a $G$-extension of the trivial component $\C_e$.

\medbreak
If $\C=\oplus_{g\in G}\C_g$ is a faithful grading of $\C$, then \cite[Proposition 8.20]{etingof2005fusion} shows that
\begin{equation*}
\FPdim(\C)=|G| \FPdim(\C_e), \quad \FPdim(\C_g)=\FPdim(\C_h),\quad \forall g,h\in G.
\end{equation*}

It follows from the results of \cite{gelaki2008nilpotent} that every fusion category $\C$ has a canonical faithful grading $\C=\oplus_{g\in U(\C)}\C_g$ with trivial component $\C_e=\C_{ad}$, where $\C_{ad}$ is the adjoint subcategory of $\C$ generated by simple the constituents of $X\otimes X^*$, for all $X\in \Irr(\C)$. This grading is called the universal grading of $\C$, and $U(\C)$ is called the \emph{universal grading group} of $\C$. Any other faithful grading of $\C=\oplus_{g\in G}\C_g$ is determined by a surjective group homomorphism $\pi:U(\C)\to G$. Hence the trivial component $\C_e$ contains $\C_{ad}$.

\medbreak
Let $G$ be a finite group and let $\C$ be a $G$-extension of a fusion category $\D \cong \C_e$. Let also $\omega \in Z^3(G, k^{\times})$ be a 3-cocycle. We shall denote by $\C^\omega$ the fusion category obtained from $\C$ by twisting the associator with $\omega$. For $\omega_1, \omega_2 \in Z^3(G, k^{\times})$, the categories $\C^{\omega_1}$ and $\C^{\omega_2}$ are equivalent as $G$-extensions of $\D$ if and only if the classes of $\omega_1$ and $\omega_2$ coincide in $H^3(G, k^{\times})$. See \cite{ENO-homotopy}.

\subsection{Braided fusion categories}\label{sec2.2}
A braided fusion category $\C$ is a fusion category admitting a braiding $c$, that is, a family of natural isomorphisms: $c_{X,Y}$:$X\otimes Y\rightarrow Y\otimes X$, $X,Y\in\C$, obeying the hexagon axioms.

\medbreak
Let $\C$ be a braided fusion category. Then $X,Y\in \C$ are said to centralize each other if $c_{Y,X}c_{X,Y}=\id_{X\otimes Y}$. The centralizer $\D'$ of a fusion subcategory $\D\subseteq \C$ is the full subcategory of objects which centralize every object of $\D$, that is
$$\D^{\prime}=\{X\in \C \mid c_{Y,X}c_{X,Y}=\id_{X\otimes Y},\forall\ Y\in \D\}.$$

The M\"{u}ger center $\Z_2(\C)$ of a braided fusion category $\C$ is the centralizer $\C'$ of $\C$ itself. A braided fusion category $\C$ is called non-degenerate if $\Z_2(\C)$ is equivalent to the trivial category $\vect$. A braided fusion category $\C$ is called slightly degenerate if $\Z_2(\C)$ is equivalent to the category $\svect$ of super-vector spaces.

\medbreak Two full subcategories $\D$ and $\tilde \D$ of $\C$ are said to \emph{projectively centralize each other} if for all simple objects $X \in \D$ and $Y\in \tilde D$, the squared-braiding $c_{Y, X}c_{X, Y}$ is a scalar multiple of the identity $\id_{X\otimes Y}$. See \cite[Subsection 3.3]{drinfeld2010braided}.

Suppose that  $\D$ and $\tilde \D$ are fusion subcategories of $\C$ that projectively centralize each other. Then \cite[Proposition 3.32]{drinfeld2010braided} shows that there exist finite groups $G$ and $\tilde G$ endowed with a non-degenerate pairing $b: G \times \tilde G \to k^\times$ and faithful gradings $\D = \bigoplus_{g \in G}\D_g$, $\tilde \D = \bigoplus_{g \in \tilde G}{\tilde \D}_g$, such that $\D_0 = \D \cap \tilde\D'$, $\tilde\D_0 = \D' \cap \tilde\D$, and for all homogeneous simple objects $X \in \D_g$, ${\tilde \D}_h$, $g\in G$, $h \in \tilde G$, the squared-braiding $c_{Y, X}c_{X, Y}$ is given by
$$c_{Y, X}c_{X, Y} = b(g, h) \, \id_{X \otimes Y}.$$

\medbreak
A braided fusion category $\C$ is called symmetric if $\Z_2(\C)=\C$. Hence the M\"{u}ger center of a braided fusion category is a symmetric fusion category.

A symmetric fusion category $\C$ is called Tannakian if it is equivalent to the category $\Rep(G)$ of finite-dimensional representations of a finite group $G$, as braided fusion categories.

\medbreak
Let $\C$ be a symmetric fusion category. Deligne proved that there exist a finite group $G$ and a central element $u$ of order $2$, such that $\C$ is equivalent to the category $\Rep(G,u)$ of representations of $G$ on finite-dimensional super vector spaces, where $u$ acts as the parity operator \cite{deligne1990categories}.

The symmetric category $\C$ is either Tannakian or a $\mathbb{Z}_2$-extension of a Tannakian subcategory.
Therefore,  if $\FPdim(\C)$ is odd, then $\C$ is Tannakian. Moreover if $\FPdim(\C)$ is bigger than $2$ then $\C$ necessarily contains a Tannakian subcategory. Also, a non-Tannakian symmetric fusion category of Frobenius-Perron dimension $2$ is equivalent to the category $\svect$, see \cite[Subsection 2.12]{drinfeld2010braided}.

\begin{proposition}\label{Adjo_Point}
Let $\C$ be a braided fusion category. Then $\C_{ad}\subseteq (\C_{pt})'$.
\end{proposition}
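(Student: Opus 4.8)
The plan is to exploit that an invertible object has a simple tensor product with any simple object, so that its double braiding with a simple object is governed by a single scalar, and then to show that this scalar is trivial on the adjoint subcategory.

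First I would reduce the statement. Since the relation ``$X$ and $Y$ centralize each other'' is symmetric and $\C_{pt}$ is generated as a fusion subcategory by the invertible objects $G(\C)$, the inclusion $\C_{ad}\subseteq (\C_{pt})'$ is equivalent to the assertion that every invertible object $g\in G(\C)$ centralizes every object of $\C_{ad}$. For fixed $g$, the full subcategory $\{g\}'$ of objects centralizing $g$ is a fusion subcategory (it is closed under tensor products, subobjects and duals, and contains $\1$). As $\C_{ad}$ is by definition generated by the simple constituents of $X\otimes X^*$, $X\in\Irr(\C)$, it therefore suffices to prove that each such constituent lies in $\{g\}'$.

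Next I would attach a scalar to $g$. Because $g$ is invertible, $g\otimes Y$ is simple for every simple $Y$, so by Schur's lemma the double braiding $M_{g,Y}:=c_{Y,g}\,c_{g,Y}\in\End(g\otimes Y)$ equals $\beta(g,Y)\,\id_{g\otimes Y}$ for a scalar $\beta(g,Y)\in k^\times$. Naturality of the braiding makes $M_g$ a natural endomorphism of the functor $g\otimes(-)$, which will let me transport $\beta$ along morphisms between objects. The heart of the argument is a multiplicativity property: using the two hexagon axioms to expand $c_{g,A\otimes B}$ and $c_{A\otimes B,g}$, and then absorbing the central scalar $M_{g,B}=\beta(g,B)\,\id$, one obtains
\[ M_{g,A\otimes B}=\beta(g,A)\,\beta(g,B)\,\id_{g\otimes A\otimes B} \]
for all objects $A,B$. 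Applying naturality of $M_g$ to the inclusion of a simple summand $W\hookrightarrow A\otimes B$ then yields $\beta(g,W)=\beta(g,A)\,\beta(g,B)$.

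Finally I would specialize. Taking $A=Y$, $B=Y^*$ and $W=\1$ (which is a constituent of $Y\otimes Y^*$ since $Y$ is simple) gives $\beta(g,Y)\,\beta(g,Y^*)=\beta(g,\1)=1$, because the double braiding with the unit object is trivial. Hence for every simple constituent $W$ of $Y\otimes Y^*$ one gets $\beta(g,W)=\beta(g,Y)\,\beta(g,Y^*)=1$, i.e.\ $W\in\{g\}'$. This is exactly what the reduction demanded, so $\C_{ad}\subseteq\{g\}'$ for every $g\in G(\C)$, and therefore $\C_{ad}\subseteq(\C_{pt})'$. I expect the only delicate point to be the hexagon bookkeeping establishing the displayed multiplicativity together with the naturality argument that descends $\beta$ to simple summands; once these are in place the conclusion is purely formal.
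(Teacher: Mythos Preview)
Your argument is correct. The one imprecision is the phrase ``for all objects $A,B$'': the scalar $\beta(g,-)$ is only defined on simples, so the displayed multiplicativity should be stated for simple $A,B$ (which is all you use, since you take $A=Y$, $B=Y^*$). With that understood, the hexagon computation and the naturality descent to summands go through as you describe.

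Your route is genuinely different from the paper's. The paper does not compute with the braiding at all: it first quotes the known equality $\Z(\C)_{ad}=(\Z(\C)_{pt})'$ valid in any non-degenerate braided category, and then embeds an arbitrary braided $\C$ into its Drinfeld center $\Z(\C)$ to deduce $\C_{ad}\subseteq\Z(\C)_{ad}=(\Z(\C)_{pt})'\subseteq(\C_{pt})'$. This is shorter but leans on the non-degenerate result from \cite{drinfeld2010braided} and on the canonical braided embedding $\C\hookrightarrow\Z(\C)$. Your argument is more elementary and self-contained: it works entirely inside $\C$, using only Schur's lemma and the hexagon axioms, and in fact proves the stronger statement that for each invertible $g$ the map $Y\mapsto\beta(g,Y)$ is a character on $K(\C)$ trivial on $K(\C_{ad})$ (equivalently, it factors through the universal grading group). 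The trade-off is that you do more explicit bookkeeping, while the paper outsources the work to a structural citation.
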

\begin{proof}
Suppose first that $\C$ is non-degenerate. Then $\C_{ad}= (\C_{pt})'$ by \cite[Corollary 3.27]{drinfeld2010braided}.

Let now $\C$ be an arbitrary braided fusion category. The braiding of $\C$ induces a canonical embedding of braided fusion categories $\C\hookrightarrow \Z(\C)$. Hence, we may identify $\C$ with a fusion subcategory of $\Z(\C)$. We therefore have $\C_{pt}\subseteq \Z(\C)_{pt}$ and $\C_{ad}\subseteq \Z(\C)_{ad}$. This implies that $(\C_{pt})'\supseteq (\Z(\C)_{pt})'=\Z(\C)_{ad}\supseteq\C_{ad}$. The  equality holds true because $\Z(\C)$ is non-degenerate, also by \cite[Corollary 3.27]{drinfeld2010braided}. This completes the proof.
\end{proof}

The following theorem is due to Drinfeld et al. In the case when $\C$ is modular, it is due to M\"{u}ger \cite[Theorem 4.2]{muger2003structure}.

\begin{theorem}{\cite[Theorem 3.13]{drinfeld2010braided}}\label{MugerThm}
Let $\C$ be a braided fusion category and $\D$ be a non-degenerate subcategory of $\C$. Then $\C$ is braided equivalent to $\D\boxtimes \D'$, where $\D'$ is the centralizer of $\D$ in $\C$.
\end{theorem}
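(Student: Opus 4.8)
The plan is to realize the stated equivalence by an explicit braided tensor functor and to verify it is an equivalence through a fully faithfulness computation together with a dimension count, following Müger's argument in the modular case. First I would construct a functor $F\colon \D \boxtimes \D' \to \C$ determined on objects by $F(X \boxtimes Y) = X \otimes Y$ and extended to morphisms via the universal property of the Deligne product $\boxtimes$. To promote $F$ to a tensor functor I must supply natural isomorphisms
$$J_{X_1,Y_1,X_2,Y_2}\colon (X_1 \otimes Y_1)\otimes(X_2 \otimes Y_2) \to (X_1 \otimes X_2)\otimes(Y_1 \otimes Y_2),$$
and the natural choice is $J = \id_{X_1}\otimes c_{Y_1,X_2}\otimes \id_{Y_2}$, using the braiding $c$ of $\C$ to interchange the middle factors. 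Verifying that $J$ satisfies the coherence making $F$ a tensor functor uses precisely that $\D'$ centralizes $\D$: the equality $c_{Y,X}c_{X,Y}=\id_{X\otimes Y}$ for $X\in\D$, $Y\in\D'$ is exactly what forces the relevant hexagon-type diagrams to commute, and it also guarantees that $F$ intertwines the product braiding $c^\D_{X_1,X_2}\boxtimes c^{\D'}_{Y_1,Y_2}$ of $\D\boxtimes\D'$ with the braiding of $\C$, so that $F$ is braided.

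The crux is then to show $F$ is fully faithful; by semisimplicity it suffices to check this on simple objects. For simple $X_1,X_2\in\D$ and $Y_1,Y_2\in\D'$, rigidity gives $\Hom_\C(X_1\otimes Y_1, X_2\otimes Y_2)\cong \Hom_\C(\1, Y_1^\ast\otimes(X_1^\ast\otimes X_2)\otimes Y_2)$. Writing $A = X_1^\ast\otimes X_2\in\D$ and applying the braiding isomorphism $c_{Y_1^\ast,A}$ to move $Y_1^\ast$ past $A$, this becomes $\Hom_\C(\1, A\otimes(Y_1^\ast\otimes Y_2))$ with $A\in\D$ and $Y_1^\ast\otimes Y_2\in\D'$. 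Decomposing both factors into simple constituents and using that a nonzero morphism between simple objects is an isomorphism, any nonzero contribution would force a common simple object of $\D$ and $\D'$; but since $\D$ is non-degenerate one has $\D\cap\D' = \Z_2(\D)\simeq\vect$, so only the unit contributes. Counting multiplicities of $\1$ then yields $\dim\Hom_\C(X_1\otimes Y_1, X_2\otimes Y_2) = \dim\Hom_\D(X_1,X_2)\cdot\dim\Hom_{\D'}(Y_1,Y_2)$, which is exactly $\dim\Hom_{\D\boxtimes\D'}(X_1\boxtimes Y_1, X_2\boxtimes Y_2)$. Hence $F$ is fully faithful and identifies $\D\boxtimes\D'$ with a fusion subcategory of $\C$.

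Finally I would establish essential surjectivity by comparing Frobenius-Perron dimensions. Full faithfulness gives $\FPdim(\operatorname{Im}F) = \FPdim(\D\boxtimes\D') = \FPdim(\D)\FPdim(\D')$, so it remains to see $\FPdim(\D)\FPdim(\D') = \FPdim(\C)$. This is the dimension formula for centralizers: for a fusion subcategory $\D$ of a braided fusion category one has $\FPdim(\D)\FPdim(\D') = \FPdim(\C)\FPdim(\D\cap\Z_2(\C))$, and non-degeneracy of $\D$ forces $\D\cap\Z_2(\C)\subseteq \D\cap\D' = \Z_2(\D)\simeq\vect$, whence $\FPdim(\D)\FPdim(\D') = \FPdim(\C)$. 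Therefore $\operatorname{Im}F = \C$ and $F$ is an equivalence, braided by the construction above. I expect the main obstacle to be the careful bookkeeping of the coherence that makes $F$ a braided tensor functor, together with the input of the centralizer dimension formula for surjectivity; the fully faithfulness computation, by contrast, falls out cleanly once the centralizing and non-degeneracy hypotheses are in place.
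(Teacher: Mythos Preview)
The paper does not supply a proof of this theorem at all: it is quoted verbatim as \cite[Theorem 3.13]{drinfeld2010braided}, with the attribution that the modular case is due to M\"uger. There is therefore no ``paper's own proof'' to compare against.

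Your outline is the standard argument and is essentially how M\"uger and then Drinfeld--Gelaki--Nikshych--Ostrik proceed: build the braided tensor functor $F(X\boxtimes Y)=X\otimes Y$ with tensor structure $\id\otimes c_{Y_1,X_2}\otimes\id$, check full faithfulness from $\D\cap\D'=\Z_2(\D)\simeq\vect$, and finish with a Frobenius--Perron dimension count. The only point to flag is the last step. The identity you invoke, $\FPdim(\D)\,\FPdim(\D')=\FPdim(\C)\,\FPdim(\D\cap\Z_2(\C))$, is not a triviality for a general ambient braided fusion category $\C$; in \cite{drinfeld2010braided} the corresponding dimension relations are established alongside Theorem~3.13 (see their Theorems~3.10--3.14), so if you present this as an independent proof you should cite that input explicitly rather than treat it as a black-box formula. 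Once that is granted, your reduction $\D\cap\Z_2(\C)\subseteq\Z_2(\D)\simeq\vect$ is correct and essential surjectivity follows.
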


For a pair of  fusion subcategories $\A,\B$ of $\D$, we use $\A\vee \B$ to denote the smallest fusion subcategory of $\C$ containing $\A$ and $\B$. The following result will be  used frequently.
\begin{lemma}\cite[Corollary 3.11]{drinfeld2010braided}\label{double_centralzer}
Let $\C$ be a braided fusion category. If $\D$ is any fusion subcategory of $\C$ then $\D''=\D\vee\mathcal{Z}_2(\C)$.
\end{lemma}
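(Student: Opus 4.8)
The plan is to prove the two inclusions separately, establishing the nontrivial one by a Frobenius--Perron dimension count. First I would record the easy inclusion $\D\vee\Z_2(\C)\subseteq\D''$. Indeed, $\D\subseteq\D''$ holds for any fusion subcategory, since by definition every object of $\D$ centralizes every object of $\D'$; and $\Z_2(\C)=\C'$ centralizes every object of $\C$, in particular every object of $\D'\subseteq\C$, so $\Z_2(\C)\subseteq(\D')'=\D''$. As $\D''$ is a fusion subcategory, it contains the join $\D\vee\Z_2(\C)$. Since an inclusion of fusion subcategories whose Frobenius--Perron dimensions agree is an equality, it then suffices to prove $\FPdim(\D'')=\FPdim(\D\vee\Z_2(\C))$.

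The engine of the argument is the centralizer dimension formula
\begin{equation*}
\FPdim(\D)\,\FPdim(\D')=\FPdim(\C)\,\FPdim\big(\D\cap\Z_2(\C)\big),
\end{equation*}
valid for every fusion subcategory $\D$ of a braided fusion category $\C$. I would establish this by passing to the Drinfeld center: the braiding gives a canonical embedding $\C\hookrightarrow\Z(\C)$ into the non-degenerate category $\Z(\C)$, and if $\bullet^{\perp}$ denotes the centralizer computed in $\Z(\C)$, then $\D'=\D^{\perp}\cap\C$. In the non-degenerate category $\Z(\C)$ one has M\"uger's Galois correspondence --- the assignment $\E\mapsto\E^{\perp}$ is an order-reversing involution satisfying $\FPdim(\E)\FPdim(\E^{\perp})=\FPdim(\Z(\C))$ and $(\E_1\cap\E_2)^{\perp}=\E_1^{\perp}\vee\E_2^{\perp}$ --- together with the identification $\C\cap\C^{\perp}=\Z_2(\C)$. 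Tracking dimensions through these identities yields the displayed formula; this descent from $\Z(\C)$ to $\C$ is the main obstacle and the only genuinely delicate point of the proof.

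Granting the formula, the computation closes quickly. Applying it to $\D'$ in place of $\D$, and using that $\Z_2(\C)\subseteq\D'$ so that $\D'\cap\Z_2(\C)=\Z_2(\C)$, gives
\begin{equation*}
\FPdim(\D')\,\FPdim(\D'')=\FPdim(\C)\,\FPdim(\Z_2(\C)).
\end{equation*}
Dividing this by the formula for $\D$ itself eliminates $\FPdim(\C)$ and $\FPdim(\D')$ and yields
\begin{equation*}
\FPdim(\D'')=\frac{\FPdim(\D)\,\FPdim(\Z_2(\C))}{\FPdim(\D\cap\Z_2(\C))}.
\end{equation*}
On the other hand, since $\Z_2(\C)$ centralizes $\D$, the standard product formula for mutually centralizing fusion subcategories gives $\FPdim(\D\vee\Z_2(\C))=\FPdim(\D)\FPdim(\Z_2(\C))/\FPdim(\D\cap\Z_2(\C))$, which is exactly the right-hand side above. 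Hence $\FPdim(\D'')=\FPdim(\D\vee\Z_2(\C))$, and combined with the inclusion from the first paragraph this completes the proof.
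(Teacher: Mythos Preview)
The paper does not supply its own proof of this lemma: it is quoted verbatim as \cite[Corollary 3.11]{drinfeld2010braided} and used as a black box. Your argument is correct and is in fact the standard proof given in \cite{drinfeld2010braided}: one containment is immediate, and the other follows from two applications of the centralizer dimension formula (their Theorem~3.14) together with the product identity $\FPdim(\D\vee\Z_2(\C))\,\FPdim(\D\cap\Z_2(\C))=\FPdim(\D)\,\FPdim(\Z_2(\C))$, which holds because $\Z_2(\C)$ centralizes $\D$.
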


\subsection{Pointed braided fusion categories}\label{pointed-bd}

We recall in this subsection some facts related to the classification of braided pointed fusion categories. We refer the reader to \cite{JS}, \cite{quinn}, \cite{drinfeld2010braided} for a detailed exposition.

\medbreak
Let $G$ be a finite abelian group.
An \emph{abelian 3-cocycle} on $G$ with values in $k^\times$ is a pair $(\omega, c)$, where $\omega: G \times G \times G \to k^\times$ is a normalized 3-cocycle and $\sigma: G \times G \to k^\times$ is a 2-cochain such that
\begin{equation*}
\omega(a, b, c) \, \omega(b, c, a) \, \sigma(a, bc) =
\omega(b, a, c) \, \sigma(a, b) \, \sigma(a, c),
\end{equation*}
for all $a, b, c \in G$.
Abelian 3-cocycles form an abelian group $Z^3_{ab}(G, k^\times)$. Let $B^3_{ab}(G, k^\times) \subseteq Z^3_{ab}(G, k^\times)$ is the subgroup of abelian coboundaries, that is, abelian 3-cocycles of the form $(du, u (u_{21})^{-1})$ where $u: G \times G \to k^\times$ is a normalized 2-cochain and $du(a, b, c) = u(b,c) \, u(ab,c)^{-1} \, u(a, bc) \, u(a, b)^{-1}$, $a, b, c \in G$.

The quotient $H^3_{ab}(G, k^\times) = Z^3_{ab}(G, k^\times)/B^3_{ab}(G, k^\times)$ is called the \emph{abelian cohomology group} of $G$ with coefficients in $k^\times$.
Every braiding of a pointed fusion category with group $G$ of invertible objects corresponds to an element of the group $H^3_{ab}(G, k^\times)$. In particular, given a normalized 3-cocycle $\omega$ and a 2-cochain $\sigma$ on $G$, we have that the rule $$\sigma_{a, b}\id_{ab}: a\otimes b \to b\otimes a, \qquad a, b \in G,$$ defines a braiding in the fusion category $\vect_G^\omega$ if and only if $(\omega, \sigma) \in Z^3(G, k^\times)$.

\medbreak A \emph{quadratic form} on $G$ with values in $k^\times$ is a map $q: G \to k^\times$ satisfying $q(g) = q(g^{-1})$ for all $g \in G$ and such that $b(a, b) = q(ab)q(a)^{-1}q(b)^{-1}$ defines a symmetric bicharacter $G \times G \to k^\times$. If $q$ is a quadratic form on $G$, then the pair $(G, q)$ is called a \emph{pre-metric group}.

\medbreak
To every abelian 3-cocycle $(\omega, \sigma)$ on $G$ one can associate a quadratic form on $G$ defined by
\begin{equation}\label{quadratic}
q(g) = \sigma(g, g), \quad g \in G.
\end{equation}
A result of Eilenberg and Mac Lane states that this correspondence defines a group isomorphism between the abelian cohomology group $H^3_{ab}(G, k^\times)$ and the abelian group of quadratic forms on $G$.

\medbreak
Moreover, the functor that associates to every pointed fusion category $\C$ the pre-metric group $(G, q)$, where $G$ is the group of invertible objects of $\C$ and $q$ is the quadratic form \eqref{quadratic}, where $\sigma$ is the braiding of $\C$, defines an equivalence  between the category of pointed fusion categories and braided functors up to braided isomorphism and the category of pre-metric groups.

Thus, two braided fusion categories $\C(G, q)$ and $\C(G, q')$ associated to the quadratic forms $q$ and $q'$ on $G$ are equivalent if and only if there exists an automorphism $\varphi$ of $G$ such that $q'(\varphi(g)) = q(g)$, for all $g \in G$.

\medbreak The squared-braiding of the braided fusion category $\C(G, q)$ associated to a quadratic form $q$ is given by the symmetric bilinear form  $\beta: G\times G \to k^\times$ associated to $q$, defined in the form $$\beta(a, b) = q(ab)q(a)^{-1}q(b)^{-1}, \qquad a, b \in G.$$

\medbreak
Let $M$ be a natural number and let $G = \Zz_M$ be the cyclic group of order $M$.
Let also $\zeta \in k^\times$ be an $M$th root of 1. Then $\zeta$ determines a 3-cocycle $\omega_\zeta$ on $\Zz_M$ where, for all $0 \leq i, j, \ell \leq M-1$,
\begin{equation}\label{omega-zm}
\omega_\zeta(i, j, \ell) = \begin{cases}1, \quad \text{if } j+\ell < M, \\ \zeta^{i}, \quad \text{if } j+\ell \geq M.\end{cases}
\end{equation}
The assignment $\zeta \mapsto \omega_\zeta$ gives rise to a group isomorphism between the group $\mathbb G_M$ of $M$th roots of 1 in $k$ and the group $H^3(\Zz_M, k^\times)$.  In particular  $H^3(\Zz_M, k^\times) \cong \Zz_M$.

We shall denote by $\vect_{\Zz_M}^{\zeta}$ the pointed fusion category corresponding to the 3-cocycle $\omega_\zeta$. Thus  $\vect_{\Zz_M}^{1} = \vect_{\Zz_M}$ and, if $M$ is even, $\vect_{\Zz_M}^{-1}$ is the pointed fusion category corresponding to the 3-cocycle $\omega_{-1}$ associated to $\zeta = -1 \in \mathbb G_M$.

\medbreak
Let $\xi \in k^\times$ such that $\xi^{M^2} = 1 = \xi^{2M}$. Then the pair $(\omega_{\xi^M}, \sigma_{\xi})$ is an abelian 3-cocycle on $G$ where, for all $0 \leq i, j, \ell \leq M-1$,
\begin{equation}\label{braiding-xi}
\sigma_{\xi}(i, j) = \xi^{ij}.
\end{equation}
Furthermore, this gives rise to a group isomorphism between $H^3_{ab}(\Zz_M, k^\times)$ and the group $\mathbb G_d$ of $d$th roots of $1$ in $k^\times$, where $d = \text{gcd}(M^2, 2M)$. See \cite[pp. 49]{JS}, \cite[Subsection 2.5.2]{quinn}.

Thus $\vect_{\Zz_M}^{\xi^M}$ is a braided fusion category whose square braiding is given by $\beta_\xi(i, j)\, \id_{i+j}: i+j \to i+j$, where $\beta_\xi: \Zz_M\times \Zz_M \to k^\times$ is the bilinear form defined as
$$\beta_\xi (i, j) = \xi^{2ij}, \quad 0\leq i, j < M.$$

The quadratic form $q:\Zz_M \to k^\times$ and the corresponding symmetric bilinear form on $\Zz_M$ associated to the braiding \eqref{braiding-xi} are given, respectively, by the formulas
\begin{equation}\label{sq-bd}q(j) = \xi^{j^2}, \qquad \beta(i,j) = \xi^{2ij},
\end{equation}
for all $0\leq i, j \leq M-1$.

\medbreak
Note that the condition $\xi^{2M} = 1$ forces $\xi^M = \pm 1$. In particular, for a fixed value of $\zeta = \pm 1$, there are exactly $M$ choices for $\xi$. Thus we obtain:

\begin{lemma}\label{braiding-zm} The pointed fusion category $\vect_{\Zz_M}^{\zeta}$ admits a braiding if and  only if $\zeta = \pm 1$. In addition we have:
	
(1)\, If $M$ is odd, $\vect_{\Zz_M}^{\zeta}$ does not admit any braiding unless $\zeta = 1$, and in this case, it admits exactly $M$ braidings up to equivalence.	

(2)\, If $M$ is even, then each of the categories $\vect_{\Zz_M}$ and $\vect_{\Zz_M}^{-1}$ admits exactly $M$ braidings, up to equivalence.
\end{lemma}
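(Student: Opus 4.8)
The plan is to reduce everything to counting the fibers of the forgetful homomorphism
$F\colon H^3_{ab}(\Zz_M, k^\times) \to H^3(\Zz_M, k^\times)$, $[(\omega, \sigma)] \mapsto [\omega]$, using the explicit parametrizations recalled just above the statement. As explained there, a braiding on $\vect_{\Zz_M}^\zeta$ is the same datum as a $2$-cochain $\sigma$ making $(\omega_\zeta, \sigma)$ an abelian $3$-cocycle, and every braiding of a pointed category corresponds to a class in $H^3_{ab}(\Zz_M, k^\times)$ whose associator class under $F$ is $[\omega_\zeta]$. Hence, up to equivalence, the braidings on $\vect_{\Zz_M}^\zeta$ are in bijection with the fiber $F^{-1}([\omega_\zeta])$, and the whole lemma becomes a computation of $\mathrm{im}(F)$ together with the fiber sizes.

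First I would make $F$ explicit. By \eqref{braiding-xi} and the discussion following it, every class of $H^3_{ab}(\Zz_M, k^\times)$ is represented by a unique abelian $3$-cocycle $(\omega_{\xi^M}, \sigma_\xi)$ with $\xi \in \mathbb G_d$, $d = \gcd(M^2, 2M)$, and $\sigma_\xi(i,j) = \xi^{ij}$. Under the isomorphisms $H^3_{ab}(\Zz_M, k^\times) \cong \mathbb G_d$ and $H^3(\Zz_M, k^\times) \cong \mathbb G_M$ afforded by $\zeta \mapsto \omega_\zeta$ in \eqref{omega-zm}, the map $F$ becomes the power map $\xi \mapsto \xi^M$. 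Since $\xi^{2M} = 1$ forces $(\xi^M)^2 = 1$, the image of $F$ is contained in $\{1, -1\}$; more precisely it equals $\{1,-1\} \cap \mathbb G_M$, which is $\{1\}$ when $M$ is odd and $\{1, -1\}$ when $M$ is even. This already yields the first assertion: $\vect_{\Zz_M}^\zeta$ admits a braiding precisely when $[\omega_\zeta] \in \mathrm{im}(F)$, i.e. when $\zeta = \pm 1$, bearing in mind that $\zeta = -1$ occurs among the $M$th roots of $1$ only for $M$ even.

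It then remains to count the fibers, which is immediate once $F$ is identified with $\xi \mapsto \xi^M$ on $\mathbb G_d$. Being a group homomorphism, every nonempty fiber of $F$ has cardinality $|\ker F| = \#\{\xi \in \mathbb G_d : \xi^M = 1\}$. For $M$ odd, $d = \gcd(M^2, 2M) = M$, so $F$ is the trivial map on $\mathbb G_M$ and its only nonempty fiber, over $\zeta = 1$, has $M$ elements, giving (1). For $M$ even, $d = 2M$, the image has order $2$, so $|\ker F| = 2M/2 = M$; hence the fibers over $\zeta = 1$ and over $\zeta = -1$ each have exactly $M$ elements, giving (2). Equivalently, one checks directly that $\{\xi \in \mathbb G_{2M} : \xi^M = -1\}$ is the nontrivial coset of $\mathbb G_M$ inside $\mathbb G_{2M}$ and so also has $M$ elements.

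The only genuinely delicate point is the reduction in the first paragraph: justifying that equivalence classes of braidings on the fixed fusion category $\vect_{\Zz_M}^\zeta$ are in bijection with the fiber $F^{-1}([\omega_\zeta])$, and not with some coarser or finer set. This is exactly the content of the correspondence between braidings and abelian $3$-cocycles recalled above, with the associator recorded by $F$. I would be careful to emphasize that the equivalence relation in play is the one coming from abelian cohomology, and in particular that we are \emph{not} further identifying braidings related by a nontrivial automorphism of $\Zz_M$ (as in the pre-metric group description of equivalence of pointed braided categories), since such an identification would in general reduce the count below $M$.
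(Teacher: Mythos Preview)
Your argument is correct and is essentially an elaboration of the paper's own reasoning. The paper does not give a separate proof of this lemma; it simply records, immediately before the statement, that the condition $\xi^{2M}=1$ forces $\xi^M=\pm 1$ and that for each fixed value $\zeta=\pm 1$ there are exactly $M$ choices of $\xi$, and then writes ``Thus we obtain'' the lemma. Your formalization via the forgetful map $F\colon H^3_{ab}(\Zz_M,k^\times)\to H^3(\Zz_M,k^\times)$, identified with $\xi\mapsto\xi^M$ on $\mathbb G_d$, is exactly this computation made explicit, and your final caveat about the meaning of ``up to equivalence'' (abelian cohomology, not the coarser pre-metric-group equivalence) is a worthwhile clarification that the paper leaves implicit.
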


\begin{example}\label{braided-z2n} Let $N\geq 1$ and let $\xi\in k^\times$ be a $2^{N+1}$th root of 1. It follows from formulas \eqref{sq-bd} that the braided fusion category associated to $\xi$ is non-degenerate if and only if $\xi$ is primitive. If this is the case, then the underlying fusion category is $\vect_{\Zz_{2^N}}^{-1}$.
	
\medbreak	
Let $\xi\in k^\times$ be a primitive 8th root of 1. Let $\C = \C(\Zz_4, \xi)$ be the corresponding (non-degenerate) braided fusion category.  From formulas \eqref{sq-bd}, we get that $q(2) = \xi^4 = -1$, hence in this case the subcategory $\langle 2\rangle \subseteq \C$ is equivalent to $\svect$.
\end{example}

\subsection{Ising categories}\label{sec2.3}
An Ising category is a fusion category of Frobenius-Perron dimension $4$ which is not pointed.
Let $\I$ be an Ising fusion category. Then, up to isomorphism, $\I$ has a unique nontrivial invertible object  $\delta$  and  a unique non-invertible simple object $Z$. Thus $\FPdim Z = \sqrt 2$ and the fusion rules of $\I$ are determined by the relation
\begin{equation}\label{fr-ising} Z^{\otimes 2} \cong \1 \oplus \delta.
\end{equation}

In view of the results of \cite{TY}, there exist exactly 2 non-equivalent Ising fusion categories. The universal grading group of $\I$ is isomorphic to $\Zz_2$. The explicit formulas for the associators of Ising categories in \cite{TY} imply that if $\I^+$ and $\I^-$ are two non-equivalent Ising categories then, up to an equivalence of fusion categories, any of them is obtained from the other by twisting the associator by the 3-cocycle $\omega_{-1}$ on $\Zz_2$ determined by the relation $\omega_{-1}(1, 1, 1) = -1$.

\medbreak
Every Ising fusion category admits a braiding and all possible braidings are classified by the main result of \cite{siehler-braided} (see also \cite{drinfeld2010braided}); in particular all such braidings are non-degenerate. The category $\I_{pt}$ is equivalent to the category $\svect$ of super-vector spaces as a braided fusion category.

\subsection{Equivariantizations and de-equivariantizations}\label{sec23}
Let $\C$ be a fusion category with an action by tensor autoequivalences $\rho: \underline{G} \to \Aut_\otimes(\C)$ of a finite group $G$. The equivariantization $\C^G$ of $\C$ under the action of $G$ is defined as the category of $G$-equivariant objects and morphisms in $\C$. An object of $\C^G$ is a pair $(X,(u_g)_{g\in G})$, where $X$ is an object of $\C$, $u_g : \rho^g(X)\to X$, $g\in G$, is an isomorphism such that
$$u_{gh}\circ \rho^2_{g,h} = u_g \circ \rho^g(u_h),$$
for all $g, h \in G$, where $\rho^2_{g,h}: \rho^g(\rho^h(X))\to \rho^{gh}(X)$ is the monoidal structure of the action $\rho$. The tensor product of equivariant objects is defined by means of the monoidal structure of the action.

\medbreak
Let $\C$ be a fusion category and let
$\E = \Rep(G)\subseteq \mathcal{Z}(\C)$ be a Tannakian subcategory of the Drinfeld center $\Z(\C)$ of $\C$ that embeds into $\C$ via the forgetful functor $\mathcal{Z}(\C)\to \C$. Then the algebra $A=k^G$ of $k$-valued functions on $G$ is a commutative algebra in $\mathcal{Z}(\C)$. The de-equivariantization $\C_G$ of $\C$ by $\E$ is the fusion category defined as the category of left $A$-modules in $\C$.  See \cite{drinfeld2010braided} for details on equivariantizations and de-equivariantizations.

Equivariantizations and de-equivariantizations are inverse to each other:
$(\C_G)^G\cong\C\cong(\C^G)_G$. As for their Frobenius-Perron dimensions, we have
\begin{equation*}
\begin{split}
\FPdim(\C)=|G|\FPdim(\C_G), \quad \FPdim(\C^G)=|G| \FPdim(\C).
\end{split}
\end{equation*}

An object of a fusion category $\C$  is called \emph{trivial} if it is isomorphic to $\1^{\oplus n}$ for some natural number $n$.

Given a Tannakian subcategory $\Rep(G)$ of a braided fusion category $\C$, we have an exact sequence of fusion categories (see \cite[Section 1]{bruguieres2011exact}):
\begin{equation*}
\begin{split}
\Rep(G)\hookrightarrow \C\xrightarrow{F}\C_G,
\end{split}
\end{equation*}
where $\C_G$ is the de-equivariantization of $\C$ by $\Rep(G)$, $F$ is the forgetful functor. Hence $\Rep(G)$ is the kernel of $F$, that is, the subcategory of $\C$ whose objects have trivial image under $F$.

\section{Extensions of a rank 2  pointed fusion category}\label{sec42}
\subsection{General Results}

Recall that a \emph{generalized Tambara-Yamagami fusion category} is a fusion category $\C$ which is not pointed and the tensor product of two non-invertible simple objects of $\C$ is a sum of invertible objects. See \cite{liptrap2010generalized}.

\begin{theorem}\label{General_TY_Cat}
Let $\C$ be a $G$-extension of a pointed fusion category $\vect_{\mathbb{Z}_2}^{\omega}$. Then

(1)\, If $\omega = -1$ then $\C$ is pointed.

(2)\, If $\omega = 1$ then $\C$ is either pointed or a  generalized Tambara-Yamagami fusion category. If the last possibility holds, then:

\mbox{\hspace{0.6cm}}{(i)} Up to isomorphism, $\C$ has $2n$ invertible objects and $n$ simple objects of Frobenius-Perron dimension $\sqrt 2$, for some $n \geq 1$.

\mbox{\hspace{0.6cm}}{(ii)} $\C_{ad} \cong \vect_{\mathbb{Z}_2}$ as fusion categories, and $U(\C) = G$ is of order $2n$.
\end{theorem}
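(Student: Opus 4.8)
The plan is to analyze the $G$-extension $\C = \bigoplus_{g \in G} \C_g$ of $\vect_{\Zz_2}^\omega$ by exploiting the universal grading and Frobenius-Perron dimension constraints. The trivial component $\C_e \cong \vect_{\Zz_2}^\omega$ has exactly two invertible simple objects, say $\1$ and $\delta$ with $\delta^{\otimes 2} \cong \1$, and $\FPdim(\C_e) = 2$. By \cite[Proposition 8.20]{etingof2005fusion}, each homogeneous component $\C_g$ has $\FPdim(\C_g) = 2$. First I would examine each component $\C_g$ separately: since its Frobenius-Perron dimension is $2$, the simple objects in $\C_g$ either consist of two invertibles, or of a single simple object $X$ with $\FPdim(X) = \sqrt 2$. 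This dichotomy is the structural backbone of the whole argument.

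For part (1), the goal is to show that $\omega = -1$ forces $\C$ to be pointed. The key obstruction to a $\sqrt 2$-dimensional simple object $X$ sitting in some component $\C_g$ is a cohomological one: if such an $X$ exists, then $X \otimes X^* \cong \1 \oplus \delta$ lies in $\C_e$, so $\C_e$ together with $X$ generates an Ising-type fusion subcategory, which is a $\Zz_2$-extension of $\vect_{\Zz_2}^\omega$ of dimension $4$. By the Tambara-Yamagami classification recalled in Subsection~\ref{sec2.3}, an Ising category is a $\Zz_2$-extension of $\vect_{\Zz_2}^{+1}$ (the trivial component must carry trivial cohomology), so the associator on the $\Zz_2$ sitting inside $\C_e$ cannot be the nontrivial class $\omega_{-1}$. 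Thus when $\omega = -1$, no $\sqrt 2$-dimensional simple object can appear, every component is pointed, and hence $\C$ is pointed. \textbf{I expect this compatibility between the cocycle $\omega$ on $\C_e$ and the existence of Tambara-Yamagami fusion rules to be the main obstacle}, since it requires carefully tracking how the restriction of the associator to the subcategory generated by a single $\sqrt 2$-object is constrained.

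For part (2), assume $\omega = 1$ and that $\C$ is not pointed, so some component contains a $\sqrt 2$-dimensional simple object. The claim is that \emph{every} non-invertible simple object has dimension $\sqrt 2$ and that the invertibles and the $\sqrt 2$-objects are evenly distributed. The fusion rule $X \otimes X^* \cong \1 \oplus \delta$ for any such $X$ shows that the product of two non-invertible simples is a sum of invertibles, giving the generalized Tambara-Yamagami structure directly. To prove (i), I would use the action of the group $G(\C)$ of invertible objects on $\Irr(\C)$ via tensor multiplication: left multiplication by an invertible sends a $\sqrt 2$-object to a $\sqrt 2$-object, and the stabilizer computation via equation~\eqref{decom1} shows each orbit has a fixed size, forcing the count to be $2n$ invertibles and $n$ non-invertibles. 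For (ii), I would identify $\C_{ad}$ as the subcategory generated by the constituents of $X \otimes X^*$ over all simple $X$; since each such product is either $\1 \oplus \delta$ (for non-invertible $X$) or an invertible (for invertible $X$), the adjoint subcategory is exactly $\langle \delta \rangle \cong \vect_{\Zz_2}$. Then the universal grading group $U(\C)$ satisfies $|U(\C)| = \FPdim(\C)/\FPdim(\C_{ad}) = 4n/2 = 2n$, and since the given $G$-grading is faithful with $|G| = 2n$ matching this order, the universal grading coincides with $G$, completing the proof.
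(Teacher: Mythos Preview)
Your argument for part~(1) has a real gap. You assert that a $\sqrt 2$-dimensional simple $X\in\C_g$ together with $\C_e$ generates a $4$-dimensional Ising subcategory, but this holds only when $g$ has order $2$ in $G$: in general $X^*\in\C_{g^{-1}}$, and if $g\neq g^{-1}$ the fusion subcategory generated by $X$ is a $\langle g\rangle$-extension of $\vect_{\Zz_2}^\omega$ of dimension $2\,\ord(g)$, not $4$. Nothing in the hypotheses guarantees a self-dual $\sqrt 2$-simple. The paper does not argue via an Ising subcategory; it invokes \cite[Lemma~2.6]{jordan2009classification}, whose content is that the relation $\delta\otimes X\cong X$ (which follows from $X\otimes X^*\cong\1\oplus\delta$) forces the associator on $\langle\delta\rangle$ to be cohomologically trivial---essentially because $\C_g$ is then a rank-one module category over $\vect_{\Zz_2}^\omega$, and such a module category exists only when $\omega$ is a coboundary. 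Your idea can be repaired along these lines, but not by exhibiting an Ising subcategory.

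For part~(2) your outline is close to the paper's but two steps are too loose. First, $X\otimes X^*\cong\1\oplus\delta$ only treats the product of a non-invertible simple with its own dual; you still need $X\otimes Y$ to be a sum of invertibles for \emph{arbitrary} non-invertible $X,Y$. The paper gets this from the $\Zz_2$-grading $\C=\C_{pt}\oplus\C^1$ of \cite[Theorem~3.10]{gelaki2008nilpotent} (the dimension grading of a weakly integral category), which places $X\otimes Y\in\C^0=\C_{pt}$ immediately. Second, your orbit--stabilizer computation shows each $G(\C)$-orbit of $\sqrt 2$-simples has size $|G(\C)|/2$, but that alone does not force $|G(\C)|=2\cdot(\text{number of }\sqrt 2\text{-simples})$; you would need transitivity of the action, which is only established afterwards (Corollary~\ref{transitive-action}) using an external result. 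The paper instead reads off the count from $\FPdim(\C^0)=\FPdim(\C^1)$ in the $\Zz_2$-grading. Your treatment of~(ii) is correct and essentially matches the paper's.
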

\begin{proof}
Let $\C=\oplus_{g\in G}\C_g$ be the corresponding faithful grading such that $\C_e=\vect_{\mathbb{Z}_2}^{\omega}$. Since this grading is faithful, every component $\C_g$ has Frobenius-Perron dimension $2$. Since $\C$ is weakly integral, the Frobenius-Perron dimension of every simple object is a square root of some integer \cite[Proposition 8.27]{etingof2005fusion}. This implies that every component $\C_g$ either contains $2$ non-isomorphic invertible objects, or it contains a unique $\sqrt 2$-dimensional simple object.
If $\C$ is not pointed, then the trivial component $\C_e$ is pointed and there exists a component $\C_g$ containing a unique $\sqrt2$-dimensional simple object. It follows from \cite[Lemma 2.6]{jordan2009classification} that $\omega$ is trivial. Then (1) holds.

\medbreak
Suppose that $\C$ is not pointed. By \cite[Theorem 3.10]{gelaki2008nilpotent}, $\C$ is endowed with a faithful $\mathbb{Z}_2$-grading $\C=\oplus_{h\in \mathbb{Z}_2}\C^h$, where the trivial component $\C^0$ is $\C_{pt}$ and $\C^{1}$ contains all $\sqrt2$-dimensional simple objects. Let $X,Y$ be non-invertible simple objects of $\C$. Then $X, Y \in \C^{1}$ and hence $X\otimes Y\in \C^0$, which implies that $X\otimes Y$ is a direct sum of invertible objects. Hence $\C$ is a generalized Tambara-Yamagami fusion category.

\medbreak
Assume that the number of non-isomorphic $\sqrt 2$-dimensional simple objects  is $n \geq 1$. Then $2n = \FPdim(\C^{1}) = \FPdim(\C^0)$. Hence $|G| = 2n$  and we get part (i).

\medbreak
Since $\C_{ad}\subseteq \C_e\cong \vect_{\mathbb{Z}_2}$, we know $\C_{ad}=\vect$ or $\vect_{\mathbb{Z}_2}$. Since $\C$ is not pointed, then $\C_{ad}$ can not be $\vect$. Therefore $\C_{ad}=\C_e$ and $G=U(\C)$. In particular the order of $U(\C)$ is $2n$. This proves part (ii).
\end{proof}

For a fusion category $\C$, let $\cd(\C)$ denote the set of Frobenius-Perron dimensions of simple objects of $\C$.

\begin{corollary}\label{necessary_sufficent}
Let $\C$ be a non-pointed fusion category. Then $\C$ is an extension of a rank $2$ pointed fusion category if and only if $\cd(\C)=\{1,\sqrt{2}\}$.
\end{corollary}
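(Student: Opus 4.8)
The plan is to prove the two implications separately, invoking Theorem \ref{General_TY_Cat} for the forward direction and making a direct analysis of the adjoint subcategory for the converse. For the ``only if'' direction, suppose $\C$ is non-pointed and is a $G$-extension of a rank $2$ pointed fusion category $\D$. Since the only group of order $2$ is $\Zz_2$ and $H^3(\Zz_2, k^\times)\cong \Zz_2$, we have $\D \cong \vect_{\Zz_2}^{\omega}$ with $\omega \in \{1,-1\}$. I would then apply Theorem \ref{General_TY_Cat}: the case $\omega = -1$ is excluded, since by part (1) it would force $\C$ to be pointed, contrary to hypothesis; hence $\omega = 1$, and part (2) says $\C$ is pointed or generalized Tambara--Yamagami. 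Non-pointedness leaves the latter option, and part (i) provides $2n$ invertible objects together with $n \geq 1$ non-invertible simple objects of Frobenius--Perron dimension $\sqrt 2$. As both dimensions occur and no others do, $\cd(\C) = \{1, \sqrt 2\}$.

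For the ``if'' direction, assume $\cd(\C) = \{1, \sqrt 2\}$; in particular $\C$ is weakly integral and non-pointed. First I would observe that $\C$ carries a faithful $\Zz_2$-grading with $\C^0 = \C_{pt}$ and $\C^1$ the additive span of the $\sqrt 2$-dimensional simple objects: this is a grading because a product of two non-invertible simples has Frobenius--Perron dimension $2$, and since the only decompositions of $2$ into elements of $\{1,\sqrt 2\}$ is $1+1$, such a product is a sum of invertibles. Comparing the (equal) dimensions of the two homogeneous components gives $|G(\C)| = 2n$, where $n = |\Irr(\C^1)|$. Next, for each non-invertible simple $Z$ the object $Z \otimes Z^*$ has dimension $2$ and contains $\1$ with multiplicity one, so $Z \otimes Z^* = \1 \oplus \delta_Z$ for a unique invertible $\delta_Z$; self-duality of $Z\otimes Z^*$ together with simplicity of $Z$ (which rules out $\delta_Z = \1$) shows $\delta_Z$ is an involution. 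Because $\C_{ad}$ is generated by the simple constituents of the objects $X \otimes X^*$, and the invertible $X$ contribute only $\1$, the subcategory $\C_{ad}$ is pointed with group of invertibles $K = \langle \delta_Z : Z \in \Irr(\C^1)\rangle$.

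The crux, and the step I expect to be the main obstacle, is to show $|K| = 2$, that is, that all the $\delta_Z$ coincide. The key observation I would exploit is that $\delta_Z$ is invariant under right tensoring by invertibles: for $g \in G(\C)$, using $g^* = g^{-1}$ one computes $(Z \otimes g)\otimes (Z\otimes g)^* = Z \otimes g \otimes g^{-1} \otimes Z^* = Z \otimes Z^*$, whence $\delta_{Z\otimes g} = \delta_Z$. On the other hand, the right action $Z \mapsto Z \otimes g$ of $G(\C)$ on $\Irr(\C^1)$ has point-stabilizers of order $2$ (given by $\1$ and the non-unit constituent of $Z^* \otimes Z$), so every orbit has size $|G(\C)|/2 = n$; since there are exactly $n$ such simples, the action is transitive. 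Consequently every $\sqrt 2$-dimensional simple has the form $Z \otimes g$ and therefore shares the single value $\delta := \delta_Z$, so $K = \langle \delta\rangle \cong \Zz_2$.

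It follows that $\C_{ad} \cong \vect_{\Zz_2}^{\omega'}$ is pointed of rank $2$, and since the trivial component of the universal grading of $\C$ is exactly $\C_{ad}$, this grading exhibits $\C$ as a $U(\C)$-extension of a rank $2$ pointed fusion category, completing the proof. I would emphasize that the rank computation is the delicate point; the right-translation invariance of $\delta_Z$ combined with transitivity of the right action is precisely what delivers it, and notably without requiring any knowledge of whether $G(\C)$ is abelian.
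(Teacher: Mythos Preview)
Your proof is correct. The forward direction matches the paper exactly. For the converse, the paper proceeds by first noting (as you do) that $\C$ is a generalized Tambara--Yamagami category, but then invokes \cite[Proposition~5.2]{natale2013faithful} as a black box: that result says that in such a category $\C_{ad}$ coincides with the subcategory generated by the stabilizer $G[X]$ for \emph{any} non-invertible simple $X$, whence $\FPdim(\C_{ad})=|G[X]|=2$.

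Your argument is essentially an elementary unpacking of that citation. The transitivity of the right $G(\C)$-action that you establish via orbit--stabilizer is the content of \cite[Lemma~5.1]{natale2013faithful} (cited in the paper as Corollary~\ref{transitive-action}), and your right-translation invariance $\delta_{Z\otimes g}=\delta_Z$ is exactly what forces all stabilizers to coincide, which is what Proposition~5.2 encodes. So the two routes are morally the same, but yours is self-contained and avoids the external reference; the paper's version is shorter but relies on the reader tracking down the cited proposition. One minor remark: your appeal to ``self-duality of $Z\otimes Z^*$'' to get that $\delta_Z$ is an involution tacitly uses $Z^{**}\cong Z$; this is fine in a fusion category, but you could sidestep it entirely by observing directly that $\{\1,\delta_Z\}=G[Z]$ is a subgroup of $G(\C)$, hence $\delta_Z^2=\1$.
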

\begin{proof}
In view of Theorem \ref{General_TY_Cat}, it will be enough to show that the condition $\cd(\C)=\{1,\sqrt{2}\}$ implies that $\C$ is an extension of a rank $2$ pointed fusion category. So assume that $\cd(\C)=\{1,\sqrt{2}\}$.

As in the proof of Theorem \ref{General_TY_Cat} we get that $\C$ is a generalized Tambara-Yamagami fusion category. Then, by \cite[Proposition 5.2]{natale2013faithful}, the adjoint subcategory $\C_{ad}$ coincides with the fusion subcategory generated by $G[X]$, for any $\sqrt{2}$-dimensional simple object $X$. Hence $\FPdim(\C_{ad})=2$ and $\C$ is an extension of a rank $2$ pointed fusion category.
\end{proof}

\begin{corollary}\label{transitive-action}
Let $\C$ be a $G$-extension of $\vect_{\mathbb{Z}_2}$. Assume that $\C$ is not pointed. Then

(1)\,  The action  of the group $G(\C)$ by left (or right) tensor multiplication on the set of non-invertible simple objects of $\C$ is transitive.

(2)\, The group $\mathbb{Z}_2$ is a normal subgroup of $G(\C)$.
\end{corollary}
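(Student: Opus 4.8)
The plan is to first reduce to the concrete combinatorial picture furnished by Theorem \ref{General_TY_Cat}. Since $\C$ is a non-pointed $G$-extension of $\vect_{\mathbb{Z}_2}$, part (2) of that theorem tells us that $\C$ is a generalized Tambara-Yamagami category with exactly $2n$ invertible objects and $n$ simple objects of Frobenius-Perron dimension $\sqrt 2$, for some $n \geq 1$, that $\C_{ad} \cong \vect_{\mathbb{Z}_2}$, and that $\cd(\C) = \{1, \sqrt 2\}$. In particular $|G(\C)| = 2n$, and the left (and right) tensor-multiplication action of $G(\C)$ on $\Irr(\C)$ preserves Frobenius-Perron dimension, hence restricts to an action on the $n$-element set $S$ of non-invertible simple objects.

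The crux is to pin down the stabilizer $G[X]$ of an arbitrary $X \in S$, which I would argue equals the order-$2$ group $G(\C_{ad})$ of invertible objects of $\C_{ad}$, independently of $X$. From \eqref{decom1} the invertible constituents of $X \otimes X^*$ are precisely the elements of $G[X]$, each with multiplicity one; comparing Frobenius-Perron dimensions, $\FPdim(X \otimes X^*) = 2$, and since any non-invertible constituent would contribute at least $\sqrt 2$ beyond the unit object $\1$, the dimension count forces $X \otimes X^* = \1 \oplus \delta_X$, where $\delta_X$ is the unique nontrivial element of $G[X]$, so that $|G[X]| = 2$. Running over all non-invertible simples $X$, the $\delta_X$ together with $\1$ generate $\C_{ad}$; since $\C_{ad} \cong \vect_{\mathbb{Z}_2}$ has a unique nontrivial invertible object $\delta$, we conclude $\delta_X = \delta$ and $G[X] = \{\1, \delta\} \cong \mathbb{Z}_2$ for every $X \in S$. (Alternatively one may cite \cite[Proposition 5.2]{natale2013faithful}, as in the proof of Corollary \ref{necessary_sufficent}.) This uniformity of the stabilizers is the heart of the matter, after which both statements fall out formally.

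For part (1) I would apply the orbit-stabilizer theorem to the action of $G(\C)$ on $S$: the orbit of $X$ has cardinality $|G(\C)|/|G[X]| = 2n/2 = n = |S|$, so the orbit is all of $S$ and the action is transitive; the argument for right multiplication is identical. For part (2) I would use the general identity $G[g \otimes X] = g\,G[X]\,g^{-1}$, valid because $h \otimes g \otimes X \cong g \otimes X$ if and only if $(g^{-1}hg) \otimes X \cong X$. Since $g \otimes X$ is again a non-invertible simple object, the previous step gives $G[g \otimes X] = \mathbb{Z}_2$ as well, whence $g\,\mathbb{Z}_2\,g^{-1} = \mathbb{Z}_2$ for all $g \in G(\C)$, so that $\mathbb{Z}_2$ is normal in $G(\C)$.

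The only subtlety worth flagging is that $G(\C)$ need not be abelian a priori, so the normality in part (2) genuinely requires this conjugation-invariance rather than being automatic; equivalently, the computation shows that $\delta$ is central in $G(\C)$. Beyond this, the corollary is essentially bookkeeping built on Theorem \ref{General_TY_Cat}, the dimension constraint $\cd(\C) = \{1, \sqrt 2\}$, and the fusion rule \eqref{decom1}.
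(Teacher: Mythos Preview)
Your argument is correct and follows essentially the same route as the paper: both reduce to the generalized Tambara--Yamagami situation via Theorem~\ref{General_TY_Cat}, after which the paper simply invokes \cite[Lemma~5.1]{natale2013faithful}, whereas you spell out the orbit--stabilizer computation that this lemma encodes. In effect you have reproved the relevant content of that lemma in the special case $\cd(\C)=\{1,\sqrt 2\}$, so there is no substantive difference in strategy.
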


\begin{proof}
Since $\C$ is not pointed, Theorem \ref{General_TY_Cat} implies that $\C$ is a  generalized Tambara-Yamagami fusion category. The corollary then follows from \cite[Lemma 5.1]{natale2013faithful}.
\end{proof}

\subsection{Braided extensions of $\vect_{\mathbb{Z}_2}$}

Throughout this subsection $\C$ will be an extension of  $\vect_{\mathbb{Z}_2}$ unless explicitly stated. In addition, we assume that $\C$ is braided and not pointed.

\begin{lemma}\label{cad}
The adjoint subcategory $\C_{ad}$ is equivalent to $\svect$ as braided fusion categories.
\end{lemma}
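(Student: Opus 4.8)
The plan is to first determine $\C_{ad}$ as a fusion category and then decide which braiding it carries. Since $\C$ is braided, not pointed, and an extension of $\vect_{\Zz_2}$, Theorem \ref{General_TY_Cat} tells us that $\C$ is a generalized Tambara-Yamagami category with $\C_{ad}\cong\vect_{\Zz_2}$ as fusion categories; write $\C_{ad}=\langle\delta\rangle$, where $\delta$ is its unique nontrivial invertible object, and fix a simple object $Z$ of Frobenius-Perron dimension $\sqrt2$ (one exists by the same theorem). Because $Z\otimes Z^*$ has Frobenius-Perron dimension $2$ and is a sum of invertible objects, the decomposition \eqref{decom1} gives $Z\otimes Z^*\cong\1\oplus\delta$; in particular $\Hom(\delta\otimes Z,Z)\neq0$, so $\delta\otimes Z\cong Z$.

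Next I would restrict the braiding of $\C$ to $\C_{ad}$, which makes it a braided fusion category with underlying fusion category $\vect_{\Zz_2}$. By the classification of braidings on pointed categories recalled in Subsection \ref{pointed-bd}, and Lemma \ref{braiding-zm} with $M=2$, such a category is equivalent as a braided fusion category either to the Tannakian category $\Rep(\Zz_2)$ (the quadratic form with $q(\delta)=1$) or to $\svect$ (the quadratic form with $q(\delta)=-1$); both are symmetric, and these are the only possibilities. Thus it remains to exclude the Tannakian case.

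Suppose for contradiction that $\C_{ad}\cong\Rep(\Zz_2)$. Then $\langle\delta\rangle$ is a Tannakian subcategory of $\C$, and via the canonical embedding $\C\hookrightarrow\Z(\C)$ we are in the situation of Subsection \ref{sec23}, so we may form the de-equivariantization $\C_{\Zz_2}$ together with the functor $F\colon\C\to\C_{\Zz_2}$ of the exact sequence $\Rep(\Zz_2)\hookrightarrow\C\xrightarrow{F}\C_{\Zz_2}$. Recall that $F$ is a tensor functor preserving Frobenius-Perron dimensions and that $\Rep(\Zz_2)$ is its kernel. Since $\delta\in\Rep(\Zz_2)$ has Frobenius-Perron dimension $1$, we get $F(\delta)\cong\1$, and since $F$ preserves Frobenius-Perron dimensions, $F(Z)$ has Frobenius-Perron dimension $\sqrt2$; being of dimension strictly less than $2$, it must be a single simple object. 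On the other hand,
$$F(Z)\otimes F(Z)^*\cong F(Z\otimes Z^*)\cong F(\1\oplus\delta)\cong\1\oplus\1,$$
so $\dim\Hom(\1,F(Z)\otimes F(Z)^*)=2$, which is impossible for a simple object $F(Z)$, as Schur's lemma forces this multiplicity to be $1$. Hence $\C_{ad}$ is not Tannakian, and therefore $\C_{ad}\cong\svect$ as braided fusion categories.

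The main obstacle is this last step, namely ruling out the Tannakian structure. The mechanism I rely on is that de-equivariantizing by a hypothetical boson $\delta$ would collapse the relation $Z\otimes Z^*\cong\1\oplus\delta$ into $\1^{\oplus2}$, which is incompatible with the survival of a $\sqrt2$-dimensional object. An alternative, more computational route would be to equip $\C$ with its canonical spherical (hence ribbon) structure and compute the twist $\theta_\delta$ directly from the balancing axiom applied to $Z\otimes Z^*$; I expect the de-equivariantization argument to be cleaner, as it avoids fixing a ribbon structure and uses only the tools already assembled in Subsections \ref{pointed-bd} and \ref{sec23}.
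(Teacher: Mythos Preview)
Your proof is correct and follows essentially the same approach as the paper: identify $\C_{ad}\cong\vect_{\Zz_2}$ via Theorem~\ref{General_TY_Cat}, reduce to deciding between $\Rep(\Zz_2)$ and $\svect$, and rule out the Tannakian case by showing that the de-equivariantization functor $F$ would send $Z\otimes Z^*\cong\1\oplus\delta$ to $\1\oplus\1$, contradicting simplicity of $F(Z)$. The only cosmetic differences are that the paper cites an external lemma for the symmetry of $\C_{ad}$ while you classify the two braidings on $\vect_{\Zz_2}$ directly, and that you deduce $F(Z)$ is simple from $\FPdim F(Z)=\sqrt2<2$ whereas the paper phrases the same contradiction as ``not simple $\Rightarrow$ at least four summands in $F(Z)\otimes F(Z)^*$''.
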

\begin{proof}
By Theorem \ref{General_TY_Cat}, we know that $\C_{ad}\cong \vect_{\mathbb{Z}_2}$.
By \cite[Lemma 2.5]{2016GonNatale}, $\C_{ad} = \C_{ad}\cap C_{pt}$ is symmetric.  Suppose on the contrary that $\C_{ad}$ is Tannakian. Then $\C_{ad}\cong \Rep(\mathbb{Z}_2)$ as braided fusion categories and $\C$ is a $\mathbb{Z}_2$-equivariantization of a fusion category $\C_{\mathbb{Z}_2}$.

The forgetful functor $F:\C\rightarrow \C_{\mathbb{Z}_2}$ is a tensor functor and the image of every object in $\C_{ad}$ under $F$ is a trivial object of $\C_{\mathbb{Z}_2}$. Let $\delta$ be the unique non-trivial simple object of $\C_{ad}$. If $X$ is a non-invertible simple object of $\C$ then $X\otimes X^{*}\cong \1\oplus\delta$. Hence $F(X\otimes X^{*})\cong F(X)\otimes F(X)^{*}\cong \1\oplus \1$, which implies that $F(X)$ is not simple. On the other hand, if  $F(X)$ is not simple then the decomposition of $F(X)\otimes F(X)^{*}$ should contain at least four simple summands. This contradiction shows that the assumption is false, and therefore $\C_{ad}\cong \svect$, as claimed.
\end{proof}

Recall that if $\D$ is a fusion category with commutative Grothendieck ring and $\A$ is a fusion subcategory of $\D$, the  \emph{commutator} of $\A$ in $\D$, denoted by $\A^{co}$,  is the fusion subcategory of $\D$ generated by all simple objects $X$ of $\D$ such that $X\otimes X^*$ is contained in $\A$ \cite{gelaki2008nilpotent}.

\begin{lemma}\label{cad_cpt}
The following relations hold:

(1)\, $(\C_{ad})^{'}=\C_{pt}$ and $\mathcal{Z}_2(\C)\subseteq \C_{pt}$.

(2)\, $\mathcal{Z}_2(\C_{pt})=\C_{ad}\vee \mathcal{Z}_2(\C).$
\end{lemma}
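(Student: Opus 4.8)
The goal is to establish the two relations in Lemma \ref{cad_cpt} for a braided, non-pointed extension $\C$ of $\vect_{\Zz_2}$. The plan is to leverage the structural facts already in hand: by Theorem \ref{General_TY_Cat}(ii) we have $\C_{ad} \cong \vect_{\Zz_2}$, by Lemma \ref{cad} this adjoint subcategory is in fact $\svect$ as a braided fusion category, and by Proposition \ref{Adjo_Point} we have the general inclusion $\C_{ad} \subseteq (\C_{pt})'$.

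\medbreak
For part (1), the plan is to first verify the equality $(\C_{ad})' = \C_{pt}$. One inclusion, $\C_{ad}' \supseteq \C_{pt}$, is just the reformulation of Proposition \ref{Adjo_Point}: since $\C_{ad} \subseteq (\C_{pt})'$, taking centralizers (which reverses inclusions) gives $\C_{pt} = (\C_{pt})'' \supseteq (\C_{ad})'$; wait, more directly, $X\in \C_{pt}$ centralizes $\C_{ad}$ precisely because $\C_{ad}$ centralizes $\C_{pt}$, so $\C_{pt}\subseteq (\C_{ad})'$. For the reverse inclusion $(\C_{ad})' \subseteq \C_{pt}$, I would use a Frobenius-Perron dimension count. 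By Theorem \ref{MugerThm}-type dimension formulas, the centralizer of a fusion subcategory $\D$ in a braided category satisfies $\FPdim(\D)\,\FPdim(\D') = \FPdim(\C)\,\FPdim(\Z_2(\C))$, or at least in the non-degenerate situation $\FPdim(\D)\FPdim(\D') = \FPdim(\C)$. Since $\FPdim(\C_{ad}) = 2$ and $\FPdim(\C_{pt}) = \FPdim(\C)/2$ (because the universal grading group $U(\C) = G$ has order $2n$ while $\C^0 = \C_{pt}$ has half the total dimension by the $\Zz_2$-grading in the proof of Theorem \ref{General_TY_Cat}), a dimension comparison should force $(\C_{ad})' = \C_{pt}$ once the inclusion $\C_{pt}\subseteq (\C_{ad})'$ is known and the dimensions match. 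The containment $\Z_2(\C)\subseteq \C_{pt}$ then follows because $\Z_2(\C) = \C' \subseteq (\C_{ad})' = \C_{pt}$, using $\C_{ad}\subseteq \C$.

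\medbreak
For part (2), the plan is to apply the double centralizer formula of Lemma \ref{double_centralzer} inside a suitable braided category. Taking $\D = \C_{ad}$ in Lemma \ref{double_centralzer} gives $(\C_{ad})'' = \C_{ad}\vee \Z_2(\C)$. On the other hand, by part (1) we have $(\C_{ad})' = \C_{pt}$, so $(\C_{ad})'' = (\C_{pt})'$. It therefore remains to identify $(\C_{pt})'$ with $\Z_2(\C_{pt})$, the Müger center of the pointed subcategory. The natural approach is to observe that $\C_{pt}$ is itself a braided (indeed pointed braided) fusion category, and that the centralizer taken inside $\C$ of $\C_{pt}$, when intersected appropriately, recovers the Müger center of $\C_{pt}$: concretely, $\Z_2(\C_{pt}) = (\C_{pt})' \cap \C_{pt}$. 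Combining, $\Z_2(\C_{pt}) = (\C_{pt})' \cap \C_{pt} = (\C_{ad})'' \cap \C_{pt} = (\C_{ad}\vee\Z_2(\C))\cap \C_{pt}$. Since both $\C_{ad}$ and $\Z_2(\C)$ are already contained in $\C_{pt}$ (the former by $\C_{ad}\cong\svect\subseteq\C_{pt}$, the latter by part (1)), this intersection simplifies to $\C_{ad}\vee\Z_2(\C)$, giving the claim.

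\medbreak
The main obstacle I anticipate is carefully justifying the identification $\Z_2(\C_{pt}) = (\C_{pt})'$, i.e.\ that for the pointed subcategory the full centralizer in $\C$ already lands inside $\C_{pt}$ so that no intersection is lost. This hinges precisely on the inclusion $(\C_{ad})' = \C_{pt}$ from part (1): since $(\C_{pt})' = (\C_{ad})''= \C_{ad}\vee\Z_2(\C)\subseteq\C_{pt}$, the centralizer of $\C_{pt}$ is automatically pointed, so $(\C_{pt})'$ coincides with its own Müger center computation and the intersection with $\C_{pt}$ is harmless. I would need to state the Müger-center-as-self-centralizer relation and the behavior of centralizers of subcategories carefully, but no serious new computation should be required beyond the dimension count in part (1) and the formal manipulation of centralizers via Lemma \ref{double_centralzer} and Proposition \ref{Adjo_Point}.
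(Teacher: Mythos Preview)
Your argument for part (2) is fine and matches the paper's. The problem is in part (1): the dimension count does not close.

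First, the formula you quote is not correct. The centralizer dimension formula from \cite[Theorem 3.14]{drinfeld2010braided} reads
\[
\FPdim(\D)\,\FPdim(\D') \;=\; \FPdim(\C)\,\FPdim\bigl(\D\cap \Z_2(\C)\bigr),
\]
not $\FPdim(\C)\,\FPdim(\Z_2(\C))$. Applying it with $\D=\C_{ad}$ gives $\FPdim((\C_{ad})') = \frac{1}{2}\FPdim(\C)\,\FPdim(\C_{ad}\cap \Z_2(\C))$. For this to equal $\FPdim(\C_{pt})=\FPdim(\C)/2$ you need $\C_{ad}\cap \Z_2(\C)=\vect$, that is, $\delta\notin\Z_2(\C)$. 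You never establish this, and your hedge ``or at least in the non-degenerate situation'' does not help, since $\C$ is typically degenerate here (indeed the paper later shows $N$-Ising categories are degenerate for $N\geq 2$). If $\delta$ \emph{were} in $\Z_2(\C)$, the formula would give $(\C_{ad})'=\C$ and your argument would collapse.

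Ruling out $\delta\in\Z_2(\C)$ is exactly the substantive step. The paper does it via M\"uger's lemma \cite[Lemma 5.4]{muger2000galois}: if $\svect\subseteq \Z_2(\C)$ then $\delta\otimes Y\ncong Y$ for every simple $Y$; but any non-invertible simple $X$ satisfies $X\otimes X^*\cong \1\oplus\delta$, hence $\delta\otimes X\cong X$, a contradiction. Equivalently, the paper uses the characterization $(\C_{ad})'=\Z_2(\C)^{co}$ from \cite[Proposition 3.25]{drinfeld2010braided} to see directly that a simple $X\in(\C_{ad})'$ with $X$ non-invertible would force $\delta\in\Z_2(\C)$. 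Once you have $\C_{ad}\cap\Z_2(\C)=\vect$, either your dimension count or the paper's route finishes part (1); but that ingredient is missing from your proposal.
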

\begin{proof}
(1)\,  By \cite[Proposition 3.25]{drinfeld2010braided}, a simple object $X\in \C$ belongs to $(\C_{ad})'$ if and only if it belongs to $\mathcal{Z}_2(\C)^{co}$; that is, if and only if $X\otimes X^{*}\in \mathcal{Z}_2(\C)$. If $X$ is not invertible then $X\otimes X^{*}\cong \1\oplus \delta$ and hence $\delta\otimes X\cong X$, where $\delta$ is unique non-trivial simple object of $\C_{ad}$. Hence $\svect\subseteq \mathcal{Z}_2(\C)$. But by Lemma \ref{cad}, $\C_{ad} \cong \svect$.  This is impossible by \cite[Lemma 5.4]{muger2000galois} which says that if $\svect\subseteq \mathcal{Z}_2(\C)$ then $\delta \otimes Y\ncong Y$ for any $Y\in \C$. Therefore, $(\C_{ad})^{'}\subseteq \C_{pt}$ is pointed. By Proposition \ref{Adjo_Point}, $(\C_{ad})^{'}\supseteq (\C_{pt})^{''}=\C_{pt}\vee \mathcal{Z}_2(\C)$. Hence we have
$$\C_{pt}\supseteq (\C_{ad})^{'}\supseteq\C_{pt}\vee \mathcal{Z}_2(\C)\supseteq \C_{pt},$$
which shows that $(\C_{ad})^{'}=\C_{pt}$ and $\mathcal{Z}_2(\C)\subseteq \C_{pt}$.

(2) By part (1), we have
\begin{equation*}
\mathcal{Z}_2(\C_{pt})=\C_{pt}\cap (\C_{pt})^{'}=\C_{pt}\cap (\C_{ad})^{''}
=\C_{pt}\cap (\C_{ad}\vee \mathcal{Z}_2(\C))=\C_{ad}\vee \mathcal{Z}_2(\C).
\end{equation*}
The last equality follows from \cite[Lemma 5.6]{drinfeld2010braided} and the fact that $\C$ is braided. This proves part (2).
\end{proof}

\section{$N$-Ising categories}\label{nising}

In what follows we shall denote by $\Ii$ the semisimplification of the representation category of $U_{-q}(\mathfrak{sl}_2)$, where $q = \exp(i\pi/4)$.
Then $\Ii$ is an Ising fusion category; see Subsection \ref{sec2.3}.

Recall that there exist exactly 2 non-equivalent such fusion categories, say $\Ii$ and $\Ii^-$. So that $\Ii^-$ is obtained from $\Ii$ by twisting the associator by the 3-cocycle $\alpha$ on $\Zz_2$ such that $\alpha(1, 1, 1) = -1$.

\medbreak
We shall use the notation $\I$ to indicate either of the categories $\Ii$ or $\Ii^-$. As in Subsection \ref{sec2.3} we shall denote by $\delta$ the unique nontrivial invertible object of $\I$ and $Z$ the unique non-invertible simple object.

\medbreak
Let $M \geq 2$ be an even natural number. Consider the fusion subcategory $\C_{M}$ of $\Ii \boxtimes \vect_{\Zz_M}$ generated by the object $Z \boxtimes 1$. The relation \eqref{fr-ising} implies that $\C_M$ has $M/2$ non-invertible simple objects:
\begin{equation}Z_j = Z \boxtimes (2j+1), \quad 0 \leq j \leq \frac{M}{2}-1,
\end{equation}
and $M$ invertible objects:
\begin{equation}\delta^i \boxtimes (2j), \quad 0 \leq i \leq 1, \; 0 \leq j \leq \frac{M}{2}-1.
\end{equation}
Thus $\FPdim Z_j = \sqrt 2$, for all $j=0, \dots, M/2 -1$ and $\FPdim \C_M = 2M$.

\begin{remark}\label{are-braided}
Every fusion category $\C_M$, $M \geq 2$, admits a braiding; to see this it suffices to consider any braiding in $\Ii \boxtimes \vect_{\Zz_M}$ and restrict it to $\C_M$.
\end{remark}

The categories $\C_M$ have generalized Tambara-Yamagami fusion rules. Let us denote by $a = \1 \boxtimes 2 \in \C_M$. Explicitly, the fusion rules of $\C_M$ are determined as follows: the group of invertible objects is a direct product $\langle \delta\rangle \boxtimes \langle a \rangle \cong \Zz_2 \times \Zz_{M/2}$ and
\begin{equation}\label{fr-cm} Z_j \otimes Z_\ell \cong a_{j+\ell + 1} \oplus \delta \, a_{j+\ell + 1}, \quad 0 \leq j, \ell \leq \frac{M}{2}-1.
\end{equation}

\begin{remark}
The categories $\C_M$ are particular cases of the construction in \cite{EM} of fusion categories which are cyclic extensions of fusion categories of adjoint ADE type. Note that the adjoint subcategory of $\C_M$ coincides with the subcategory generated by $\delta$. In particular, $\C_M$ is a $\Zz_M$-extension of the fusion category of adjoint $A_3^{(1)}$ type $\Ii_{ad} \cong \vect_{\Zz_2}$.
\end{remark}

\begin{remark} The construction of the categories $\C_M$ can be generalized replacing the cyclic group $\Zz_M$ by any finite Abelian group $A$ as follows: We may suppose that $A = \Zz_{d_1} \times \dots \times \Zz_{d_r}$, where $d_1,  \dots, d_r \geq 1$. Let $e_1, \dots, e_r$ be the canonical generators of $A$. Then the fusion subcategory of $\Ii \boxtimes A$ generated by the simple objects $Z \boxtimes e_j$, $1\leq j\leq r$, is an $A$-graded extension of $\vect_{\Zz_2}$.
Observe that all the fusion categories arising in this way admit a braiding (c.f. Remark \ref{are-braided}). The examples arising from this construction in fact boil down to the ones obtained from cyclic groups, in view of Theorem \ref{cls-extensions-vec2} below.
\end{remark}

\emph{Let $N \geq 1$. In what follows we shall use the notation $\Ii_N$ to indicate the fusion category $\C_{2^N}$ defined above.}

\begin{example}\label{self-dual} As pointed out before, the category $\Ii_1 = \Ii$ is an Ising fusion category. In particular, it is non-degenerate. The category $\Ii_2$ has two non-isomorphic simple objects $Z_1$ and $Z_2$ of Frobenius-Perron dimension $\sqrt 2$. The group of invertible objects is $\langle \delta \rangle \times \langle a \rangle \cong \Zz_2 \times \Zz_2$ and we have the fusion rules
$$Z_1^* \cong Z_2, \quad Z_1^{\otimes 2} \cong a \oplus \delta a \cong Z_2^{\otimes 2}.$$
In particular, $\Ii_2$ does not contain any Ising fusion subcategory.

More generally, the fusion rules \eqref{fr-cm} imply that $\C_M$ contains a non-invertible self-dual simple object if and only if $M/2$ is odd. If this is the case, such self-dual simple object must generate an Ising fusion subcategory. From the non-degeneracy of Ising fusion categories we obtain, for each $M$ such that $M/2$ is odd, an equivalence fusion categories $\C_M \cong \Ii \boxtimes \B$ or $\C_M \cong \Ii^- \boxtimes \B$, where $\B$ is a pointed fusion category. Furthermore, these are equivalences of braided fusion categories regardless of the choice of the braiding in the category $\C_M$. This feature is generalized in Theorem \ref{fact-cm} below.
\end{example}

\begin{theorem}\label{fact-cm} Let $M \geq 2$ be an even natural number. Suppose that $M = 2^N m$, where $N \geq 1$ and $m \geq 1$ is odd. Then there is an equivalence of fusion categories $\C_M \cong \Ii_N \boxtimes \B$, where $\B$ is a pointed fusion category. Moreover, with respect to any braiding in $\C_M$, this is an equivalence of braided fusion categories for an appropriate braiding in $\Ii_N$. \end{theorem}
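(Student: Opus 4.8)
The plan is to first establish the underlying equivalence of fusion categories via the Chinese Remainder Theorem, and then upgrade it to a braided equivalence by showing that the two tensor factors centralize each other with respect to any braiding.

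For the fusion statement, since $\gcd(2^N, m) = 1$ I would use the group isomorphism $\Zz_M \cong \Zz_{2^N} \times \Zz_m$, which (all associators being trivial) gives $\vect_{\Zz_M} \cong \vect_{\Zz_{2^N}} \boxtimes \vect_{\Zz_m}$ and hence $\Ii \boxtimes \vect_{\Zz_M} \cong (\Ii \boxtimes \vect_{\Zz_{2^N}}) \boxtimes \vect_{\Zz_m}$. Under this equivalence the generator $Z \boxtimes 1$ of $\C_M$ is sent to $(Z \boxtimes 1)\boxtimes 1$, so $\C_M$ is carried to the fusion subcategory generated by this object, and I claim this subcategory equals the Deligne subproduct $\Ii_N \boxtimes \vect_{\Zz_m}$. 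Since a fusion subcategory is determined by its set of simple objects, it suffices to match simple objects: those of $\Ii_N \boxtimes \vect_{\Zz_m}$ are $\delta^i \boxtimes(\text{even})\boxtimes t$ and $Z\boxtimes(\text{odd})\boxtimes t$ with the residue taken modulo $2^N$ and $t\in\Zz_m$, while those of $\C_M$ are $\delta^i\boxtimes(\text{even})$ and $Z\boxtimes(\text{odd})$ with the residue taken modulo $M$. Because $m$ is odd and $2^N$ is even, an integer is even if and only if its residue modulo $2^N$ is even, so the Chinese Remainder parametrization matches the two lists exactly. This yields $\C_M \cong \Ii_N \boxtimes \B$ as fusion categories, with $\B = \vect_{\Zz_m}$ pointed.

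For the braided statement, fix an arbitrary braiding $c$ on $\C_M$ and view $\Ii_N$ and $\B = \vect_{\Zz_m}$ as the fusion subcategories just described; I would show they centralize each other. On invertibles this is immediate: the squared braiding restricts to a symmetric bicharacter $\beta$ on $G(\C_M)$ (Subsection \ref{pointed-bd}), and for $h\in G(\Ii_N)\cong \Zz_2\times\Zz_{2^{N-1}}$ and $g\in G(\B)\cong\Zz_m$ the value $\beta(h,g)$ is a root of unity whose order divides both $\ord(h)$, a power of $2$, and $\ord(g)\mid m$, which is odd; hence $\beta(h,g)=1$. The crux is the non-invertible objects. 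For the simple object $Z\in\Ii_N$ and invertible $g\in\B$, the double braiding $c_{g,Z}c_{Z,g}$ is a scalar $\mu_Z(g)$ (since $g\otimes Z$ is simple), and $\mu_Z(g)^{\ord(g)}=1$, so $\mu_Z(g)$ has odd order. On the other hand, multiplicativity of the monodromy in the non-fixed variable gives $\mu_Z(g)^2 = \mu_{Z^{\otimes 2}}(g)$, and since $Z^{\otimes 2}\cong a_2\oplus\delta a_2$ for an invertible object $a_2$ of $\Ii_N$ of $2$-power order, this equals $\beta(a_2,g)=1$ by the invertible case. Thus $\mu_Z(g)^2=1$ while $\mu_Z(g)$ has odd order, forcing $\mu_Z(g)=1$. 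As every simple object of $\Ii_N$ is invertible or an invertible tensored with $Z$, it follows that $\Ii_N$ and $\B$ centralize each other.

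Finally, $\Ii_N\cap\B=\vect$, since their groups of invertibles intersect trivially and $\B$ is pointed, while $\FPdim(\Ii_N)\FPdim(\B)=2^{N+1}m=2M=\FPdim(\C_M)$, so $\Ii_N\vee\B=\C_M$. Since $\Ii_N$ and $\B$ centralize each other and intersect trivially, the canonical functor $\Ii_N\boxtimes\B\to\C_M$ is a braided embedding, and by the dimension count it is an equivalence of braided fusion categories \cite{drinfeld2010braided}; here $\Ii_N$ carries the braiding obtained by restricting $c$, which is the appropriate braiding in the statement. I expect the main obstacle to be the non-invertible case above: unlike an invertible object, $Z$ carries no order, so one cannot argue directly from coprimality of dimensions and must instead pass through $Z^{\otimes 2}$ in order to reduce the monodromy to the pointed bicharacter $\beta$.
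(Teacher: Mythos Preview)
Your proof is correct and shares the paper's overall architecture: use the Chinese Remainder splitting $\Zz_M\cong\Zz_{2^N}\times\Zz_m$ to locate $\Ii_N$ and $\B=\vect_{\Zz_m}$ inside $\C_M$, check that they intersect trivially and generate, and then show that under any braiding they centralize each other. The differences are in execution. For the fusion equivalence, you push the external CRT equivalence through and match lists of simple objects, whereas the paper works internally: it names the subcategories $\D_1=\langle\1\boxtimes 2^N\rangle$ and $\D_2=\langle Z\boxtimes m\rangle$, equips $\C_M$ with one convenient braiding (an Ising braiding times the trivial one) under which $\D_1$ and $\D_2$ visibly centralize, and then invokes \cite[Proposition~7.7]{muegerII} to conclude $\D_1\vee\D_2\cong\D_1\boxtimes\D_2$. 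For the braided upgrade, the paper dispatches the matter in a single sentence via \cite[Proposition~3.32]{drinfeld2010braided}: once one has a Deligne factorization as fusion categories with factors of coprime Frobenius--Perron dimension, any braiding forces the (automatic) projective centralization to be honest centralization. Your monodromy computation---reducing $\mu_Z(g)$ to the pointed bicharacter by passing through $Z^{\otimes 2}$---is precisely that proposition unpacked in the present case; it has the virtue of being self-contained, while the paper's route is shorter. One small point of hygiene: when you write $\mu_{Z^{\otimes 2}}(g)=\beta(a_2,g)$ you are implicitly using that the monodromy acts by the \emph{same} scalar on both summands $a_2$ and $\delta a_2$ of $Z^{\otimes 2}$, which holds here because $\beta(\delta,g)=1$; it is worth saying this explicitly rather than attaching a single scalar to a non-simple object.
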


\begin{proof} It will be enough to show that $\C_M \cong \Ii_N \boxtimes \B$ as fusion categories. Indeed, if this is the case, then
regardless of the braiding we consider in $\C_M$, the fusion subcategories $\Ii_N$ and $\B$ must centralize each other, since their Frobenius-Perron dimensions are coprime; see \cite[Proposition 3.32]{drinfeld2010braided}.

By assumption,  $\Zz_M$ is the direct sum of the subgroup generated by $m$ and the subgroup $S \cong \Zz_m$ generated by $2^N$. Let $\D_1 \cong \vect_{\Zz_m}$ denote the fusion subcategory of $\C_M$ generated by $\1 \boxtimes S$.

We have an equivalence of fusion categories $\vect_{\Zz_{2^N}} \cong \langle m \rangle \subseteq \vect_{\Zz_M}$, where $\langle m \rangle$ is the fusion subcategory generated by $m$ in $\vect_{\Zz_M}$. Thus the non-invertible simple object $Z \boxtimes m$ of $\C_M$ generates a fusion subcategory $\D_2$ equivalent to $\Ii_N$.

Consider the braiding on $\C_M$ induced by some braiding in $\Ii$ and the trivial half-braiding in $\vect_{\Zz_M}$.
With respect to such braiding, the fusion subcategories $\D_1$ and $\D_2$ centralize each other. In addition, since $\FPdim \D_1 = m$ and $\FPdim \D_2 = 2^{N+1}$ are coprime, then $\D_1 \cap \D_2 \cong \vect$. Therefore, $\D_1 \vee \D_2 \cong \D_1 \boxtimes \D_2$, by \cite[Proposition 7.7]{muegerII}.
Since $\FPdim (\D_1 \boxtimes \D_2) =  2^{N+1}m = \FPdim \C_M$, then  $\C_M = \D_1 \vee \D_2 \cong \D_1 \boxtimes \D_2 \cong \Ii_N \boxtimes \vect_{\Zz_m}$, as was to be shown.
\end{proof}

\medbreak
Let $\omega$ be a 3-cocycle on $\Zz_M$. Recall from Subsection \ref{sec2.2} that $\C_M^\omega$ denotes the fusion category obtained from $\C_M$ by twisting the associator with $\omega$.

\medbreak
It follows from \cite[Lemma 2.12]{EM} that, for every 3 cocycle $\omega$ on $\Zz_M$, the fusion category $\C_M^{\omega}$ has a concrete realization as the fusion subcategory of $\Ii \otimes \vect_{\Zz_M}^\omega$ generated by the simple object $Z \boxtimes 1$.

\medbreak
For every $M$th root of 1, $\zeta \in k^\times$, we shall denote by $\C_{M, \zeta}$ the fusion category obtained from $\C_{M}$ by twisting the associator with the 3-cocycle $\omega_\zeta$ defined by formula \eqref{omega-zm}.
Letting $M = 2^N$, we obtain $2^N$ fusion categories $\Ii_{N, \zeta}$ which are 3-cocycle twists of $\Ii_N = \Ii_{N, 1}$. For $\zeta_1 \neq \zeta_2$, the fusion categories $\Ii_{N, \zeta_1}$ and $\Ii_{N, \zeta_2}$ are non-equivalent as $\Zz_{2^N}$-extensions of $\vect_{\Zz_2}$.   We stress that, for fixed $N$, all the categories $\Ii_{N, \zeta}$ share the same fusion rules.

\begin{definition}
For $N\geq 1$, $\zeta \in \mathbb G_{2^N}$, the category $\Ii_{N, \zeta}$ will be called an \emph{$N$-Ising fusion category}.
\end{definition}






\medbreak
The next theorem shows that the decomposition of $\C_M$ in Theorem \ref{fact-cm} is sharp.

\begin{theorem}\label{nofact-in} Let $N \geq 1$ and let $\zeta \in k^\times$ be a $2^N$th root of 1. Then every proper fusion subcategory of $\Ii_{N, \zeta}$ is pointed. In particular, the category $\Ii_{N, \zeta}$ does not admit any proper exact factorization.
\end{theorem}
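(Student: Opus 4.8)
The plan is to reduce the whole question to the fusion rules and a small amount of arithmetic in the group of invertible objects. The key preliminary observation is that twisting the associator of $\C_{2^N}$ by a $3$-cocycle $\omega_\zeta$ on $\Zz_{2^N}$ leaves the Grothendieck ring unchanged. Since the fusion subcategories of a fusion category are exactly the full subcategories spanned by the subsets of $\Irr$ that contain $\1$ and are closed under duals and under taking the constituents of tensor products, and since this correspondence, together with the notion of being pointed, depends only on the fusion rules, it suffices to prove the statement for $\C := \Ii_N = \C_{2^N}$. I would then argue by contrapositive: it is enough to show that any fusion subcategory $\D$ of $\C$ which contains a non-invertible simple object must coincide with $\C$.

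So suppose $Z_j \in \D$ for some $0 \le j \le 2^{N-1}-1$. First I would force invertible objects into $\D$. Because $Z$ is self-dual in $\Ii$, one computes $Z_j^* \cong Z_{2^{N-1}-1-j}$, and then \eqref{fr-cm} with $\ell = 2^{N-1}-1-j$ gives $Z_j \otimes Z_j^* \cong \1 \oplus \delta$; as $\D$ is closed under direct summands, $\delta \in \D$. Next, \eqref{fr-cm} with $\ell = j$ gives $Z_j \otimes Z_j \cong a^{2j+1} \oplus \delta\, a^{2j+1}$, so the invertible object $a^{2j+1}$ lies in $\D$.

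The crux is then the following arithmetic point: the invertible object $a$ has order $2^{N-1}$, whereas $2j+1$ is odd and hence coprime to $2^{N-1}$; therefore $a^{2j+1}$ generates $\langle a\rangle$, and $a \in \D$. Combined with $\delta \in \D$, this shows that $\D$ contains the full group $G(\C) = \langle \delta\rangle \times \langle a\rangle$ of invertible objects. Finally, since $a \otimes Z_j \cong Z_{j+1}$ (indices mod $2^{N-1}$), applying the powers of $a$ to $Z_j$ produces every $Z_\ell \in \D$. Thus $\D$ contains every simple object of $\C$, forcing $\D = \C$. This proves that every proper fusion subcategory of $\Ii_{N,\zeta}$ is pointed.

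For the last assertion, suppose $\C$ admitted a proper exact factorization $\C = \A \bullet \B$. Then $\A$ and $\B$ are proper fusion subcategories, so by what has just been shown both are pointed, i.e. $\A, \B \subseteq \C_{pt}$. Since an exact factorization satisfies $\A \vee \B = \C$ and $\C_{pt}$ is a fusion subcategory, we would get $\C = \A \vee \B \subseteq \C_{pt}$, so $\C$ would be pointed, contradicting the presence of the $\sqrt 2$-dimensional simple objects $Z_j$. Hence no proper exact factorization exists. The only genuine obstacle in the argument is the associator-independence reduction of the first paragraph; once the problem is rephrased purely in terms of the fusion rules, everything comes down to the short computation above, whose heart is the coprimality of $2j+1$ with the $2$-power order of $a$.
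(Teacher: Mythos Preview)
Your proof is correct and rests on the same arithmetic core as the paper's: odd integers are coprime to powers of $2$, so any non-invertible simple forces the whole category. The paper packages this via the universal $\Zz_{2^N}$-grading: each non-invertible simple sits in a component $\C_{2m-1}$ of odd degree, and since $2m-1$ generates $\Zz_{2^N}$, that simple is \emph{faithful}, hence generates $\C$. You instead work directly with the fusion rules: from $Z_j$ you extract $\delta$ and $a^{2j+1}$, use coprimality of $2j+1$ with $2^{N-1}$ to get $a$, and then sweep out all the $Z_\ell$. Both arguments are the same idea viewed at slightly different levels; the paper's formulation is a one-line appeal to faithfulness, while yours makes the mechanism explicit and also spells out the exact-factorization consequence that the paper leaves implicit. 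Your reduction to the untwisted case is fine: fusion subcategories and pointedness depend only on the Grothendieck ring, which is unchanged by a cocycle twist.
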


\begin{proof} It is enough to show the first statement. Let $\C = \Ii_{N, \zeta}$. Let us identify the universal grading group of $\C$ with the cyclic group $\Zz_{2^N}$ of order $2^N$.
Let $X = Z\boxtimes 1 \in \C_1$, so that $X$ is a faithful simple object of $\C$. Then the rank of $\C_{2m-1}$ is $1$ and the rank of $\C_{2m}$ is $2$, for all $m \geq 1$. Since $2m-1$ is also a generator of $U(\C)$, we have that every non-invertible simple object of $\C$ is faithful. This implies that $\C$ contains no proper non-pointed fusion subcategories, as claimed.
\end{proof}

Recall that a braided fusion category is called \emph{prime} if it contains no non-trivial non-degenerate fusion subcategories.

\medbreak
As a consequence of Theorem \ref{nofact-in} we obtain the primeness of the braided $N$-Ising categories:

\begin{corollary}\label{in-prime} Let $N \geq 1$ and let $\I_N$ be an $N$-Ising fusion category. Assume that $\I_N$ admits a braiding. Then $\I_N$ is prime.
\end{corollary}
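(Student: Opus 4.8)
The plan is to argue by contradiction, using Müger's decomposition theorem (Theorem \ref{MugerThm}) to convert a hypothetical proper non-degenerate subcategory into a $\boxtimes$-factorization of $\C = \I_N$, and then to derive a contradiction from Theorem \ref{nofact-in}. Fix a braiding on $\C$ and suppose, towards a contradiction, that $\C$ contains a non-degenerate fusion subcategory $\D$ with $\vect \neq \D \neq \C$. The goal is to show that no such $\D$ exists, which is precisely the assertion of primeness.

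First I would apply Theorem \ref{MugerThm} to the non-degenerate subcategory $\D \subseteq \C$, obtaining a braided equivalence $\C \cong \D \boxtimes \D'$, where $\D'$ is the centralizer of $\D$ in $\C$. In particular $\FPdim \C = \FPdim \D \cdot \FPdim \D'$. Since $\D \neq \vect$ we have $\FPdim \D > 1$, so $\FPdim \D' < \FPdim \C$ and hence $\D'$ is a \emph{proper} fusion subcategory of $\C$; the subcategory $\D$ is proper by assumption. This step is the crux: knowing only that $\D$ is a proper subcategory is not enough, because a pointed braided fusion category can be non-degenerate, so the factorization provided by Theorem \ref{MugerThm} is what makes the argument work.

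Next I would invoke Theorem \ref{nofact-in}: every proper fusion subcategory of $\Ii_{N, \zeta}$ is pointed, so both $\D$ and $\D'$ are pointed. Then every simple object of the Deligne product $\D \boxtimes \D'$ has the form $X \boxtimes Y$ with $X \in \D$ and $Y \in \D'$ invertible, and such an object is invertible; therefore $\C \cong \D \boxtimes \D'$ is pointed. This contradicts the fact that $\I_N$ has $2^{N-1} \geq 1$ non-invertible simple objects of Frobenius--Perron dimension $\sqrt 2$ and is therefore not pointed. Consequently there is no non-degenerate fusion subcategory $\D$ with $\vect \neq \D \neq \C$, which is exactly the statement that $\C$ is prime.

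The argument is essentially a direct deduction from the two prior results, so I do not expect a serious obstacle; the only points requiring care are the Frobenius--Perron dimension count that forces $\D'$ to be proper (so that Theorem \ref{nofact-in} applies to it as well), and the elementary observation that a Deligne product of two pointed braided fusion categories is again pointed. Equivalently, one may phrase the contradiction by noting that $\C \cong \D \boxtimes \D'$ is a proper exact factorization of $\C$, which is ruled out directly by Theorem \ref{nofact-in}.
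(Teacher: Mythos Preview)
Your proposal is correct and follows the same route as the paper, which offers no explicit proof and simply presents the corollary as an immediate consequence of Theorem~\ref{nofact-in}. Your use of M\"uger's decomposition (Theorem~\ref{MugerThm}) to turn a hypothetical proper non-degenerate subcategory into a proper exact factorization, and then invoke Theorem~\ref{nofact-in}, is exactly the intended deduction spelled out in detail.
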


\subsection{Braidings on $N$-Ising categories}\label{braided-nisings}

In this subsection we discuss braidings on $N$-Ising fusion categories. If $N = 1$, then $\Ii_{N,\pm 1}$ are Ising fusion categories and therefore they admit (necessarily non-degenerate) braidings.

\begin{remark}
Observe that no braided fusion category equivalent to a 3-cocycle twist of one the categories $\C_M$ can be non-degenerate if $M/2$ is even.  In fact, by  \cite[Lemma 5.4 (ii)]{natale2013faithful}, every non-degenerate fusion category with generalized Tambara-Yamagami fusion rules has a non-invertible self-dual simple object. In particular, with respect to any possible braiding, an $N$-Ising fusion category is non-degenerate if and only if $N = 1$.

\medbreak 	
Let $M\geq 1$ be any even natural number. Consider the braiding in $\C_M$ induced by some fixed braiding in $\Ii$ and the trivial braiding in $\vect_{\Zz_M}$. Then the M\" uger center $\Z_2(\C_M)$ is $\C_M \cap \C_M'$, where $\C_M'$ is the M\" uger centralizer of $\C_M$ in $\Ii \boxtimes \vect_{\Zz_M}$. Since $\C_M$ is generated by  simple object $Z \boxtimes 1$, then $\C_M' = \1 \boxtimes \vect_{\Zz_M}$ and therefore $\Z_2(\C_M) \cong \vect_{\Zz_{M/2}}$ is Tannakian. Hence for this particular braiding, the category $\C_M$ is not slightly degenerate neither.
\end{remark}

Note that, by Lemma \ref{braiding-zm}, each of the categories $\vect_{\Zz_{2^N}}$ and $\vect_{\Zz_{2^N}}^{-1}$ admits a braiding. Hence $\Ii \boxtimes \vect_{\Zz_{2^N}}$ and $\Ii \boxtimes \vect_{\Zz_{2^N}}^{-1}$ admit a braiding and therefore the same holds for their fusion subcategories $\Ii_{N, 1}$ and $\Ii_{N, -1}$.

\begin{remark}\label{pm1pmi} Let $N\geq 1$ and let $\zeta \in \mathbb G_{2^N}$.
Suppose that $\Ii_{N, \zeta}$ admits a braiding. Then $\zeta = \pm 1$ or $\zeta = \pm \sqrt{-1}$.
	
\medbreak
Indeed, the pointed fusion subcategory $(\Ii_{N, \zeta})_{pt}$ is equivalent to $\langle \delta\rangle \boxtimes \langle 2\rangle \cong \vect_{\Zz_2} \boxtimes \vect_{\Zz_{2^{N-1}}}^{\bar\omega}$, where $\bar\omega$ is the 3-cocycle on $\Zz_{2^{N-1}} \cong \langle 2\rangle$ corresponding to the restriction of $\omega_{\zeta}$. Thus $\bar{\omega} = \omega_{\zeta^2}$. Since $\vect_{\Zz_{2^{N-1}}}^{\bar\omega}$ admits a braiding, Lemma \ref{braiding-zm} implies that $\zeta^2 = \pm 1$. Therefore $\zeta = \pm 1$ or $\zeta = \pm \sqrt{-1}$, as claimed.    	

\medbreak In addition, Lemma \ref{cad} implies that the adjoint subcategory $(\Ii_{N,\zeta})_{ad}$ is equivalent to $\svect$ as braided fusion categories.
\end{remark}

\begin{lemma}\label{2ising-sd} Let $\zeta \in \mathbb{G}_{4}$. Then a 2-Ising fusion category $\Ii_{2, \zeta}$ admits a braiding if and only if $\zeta = \pm 1$.
\end{lemma}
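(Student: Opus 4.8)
The plan is to prove both directions using the structural results already established for braided extensions of $\vect_{\Zz_2}$, specialized to the case $M = 4$, $N = 2$. By Remark \ref{pm1pmi}, if $\Ii_{2,\zeta}$ admits a braiding then $\zeta^2 = \pm 1$, so $\zeta \in \{1, -1, \sqrt{-1}, -\sqrt{-1}\}$; the content of the lemma is to rule out the two primitive fourth roots of unity $\zeta = \pm\sqrt{-1}$. The converse direction ($\zeta = \pm 1$ implies a braiding exists) is already handled by the discussion immediately preceding the lemma: since $\vect_{\Zz_4}$ and $\vect_{\Zz_4}^{-1}$ both admit braidings by Lemma \ref{braiding-zm}, so do $\Ii \boxtimes \vect_{\Zz_4}$ and $\Ii \boxtimes \vect_{\Zz_4}^{-1}$, and hence their fusion subcategories $\Ii_{2,1}$ and $\Ii_{2,-1}$. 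So I would dispatch this half in a sentence.

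The main work is the forward direction: assuming $\Ii_{2,\zeta}$ is braided, I want to show $\zeta \neq \pm\sqrt{-1}$. First I would record that $\C := \Ii_{2,\zeta}$ is a non-pointed braided extension of $\vect_{\Zz_2}$ with $\FPdim \C = 8$, so all the results of Section \ref{sec42} apply. By Remark \ref{pm1pmi} the pointed part $\C_{pt} \cong \vect_{\Zz_2} \boxtimes \vect_{\Zz_2}^{\bar\omega}$ where $\bar\omega = \omega_{\zeta^2}$; here $\langle\delta\rangle \cong \Zz_2$ and $\langle a \rangle = \langle \1\boxtimes 2\rangle \cong \Zz_2$. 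The idea is that the induced braiding on the pointed subcategory $\C_{pt}$ must be compatible with the abelian cohomology / quadratic form data, and that the associator on the $\langle a\rangle$ factor is precisely $\omega_{\zeta^2}$. If $\zeta = \pm\sqrt{-1}$, then $\zeta^2 = -1$, so $\bar\omega = \omega_{-1}$ is the nontrivial 3-cocycle on $\langle a\rangle \cong \Zz_2$; I then need to derive a contradiction by combining this with the structural constraints from Lemma \ref{cad} and Lemma \ref{cad_cpt}.

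The key mechanism I would exploit is the following. By Lemma \ref{cad}, $\C_{ad} \cong \svect$ as a braided category, and by Lemma \ref{cad_cpt}(2), $\Z_2(\C_{pt}) = \C_{ad} \vee \Z_2(\C)$, where $\Z_2(\C) \subseteq \C_{pt}$ by part (1). Since $\C_{ad} = \langle \delta \rangle$ is nondegenerate-free (equivalent to $\svect$, which has $\FPdim 2$) and sits inside the rank-$4$ pointed category $\C_{pt}$, I can compute the Müger center of $\C_{pt}$ as a pointed braided category via the quadratic form formalism of Subsection \ref{pointed-bd}. The crucial point is that $\C_{ad} = \langle\delta\rangle$, being $\svect$, forces $q(\delta) = -1$ for the quadratic form $q$ on $G(\C) = \langle\delta\rangle \times \langle a\rangle$ attached to the braiding. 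The obstruction I expect to be the heart of the argument: a braiding on $\vect_{G}^{\omega}$ requires $(\omega, \sigma)$ to be an abelian $3$-cocycle, and by Eilenberg--Mac Lane (Subsection \ref{pointed-bd}) the underlying ordinary $3$-cocycle class $\omega$ on $G = \Zz_2 \times \Zz_2$ must be one that arises from a quadratic form. The restriction of $\omega_\zeta$ to $\langle a\rangle$ being $\omega_{\zeta^2} = \omega_{-1}$ (nontrivial) when $\zeta = \pm\sqrt{-1}$ must be shown to be incompatible with the braidability of the whole category $\C$, i.e. with $\C_{pt}$ carrying a quadratic form whose associated $3$-cocycle restricts correctly. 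Concretely, I expect the contradiction to come from the fact that when $\zeta^2 = -1$, the $3$-cocycle $\omega_\zeta$ on $\Zz_4$ does not lie in the image of the forgetful map $H^3_{ab}(\Zz_4, k^\times) \to H^3(\Zz_4, k^\times)$ computed in Subsection \ref{pointed-bd}: there, braidable $3$-cocycles on $\Zz_M$ are exactly the $\omega_{\xi^M}$ with $\xi^{2M}=1$, forcing $\xi^M = \pm 1$, i.e. $\zeta = \pm 1$. Since the full category $\C$ cannot be braided unless its pointed part and grading data are, this pins down $\zeta = \pm 1$.

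The step I anticipate being the main obstacle is making rigorous the passage from ``$\C$ is braided'' to ``$\omega_\zeta$ on $\Zz_4 = U(\C)$ must be a braidable ($\xi^4$-type) cocycle.'' The subtlety is that the braiding on $\C$ restricts to the universal grading group only up to the associator twist that distinguishes the various $\Ii_{2,\zeta}$, so I would argue that the three-cocycle twist $\omega_\zeta$ that produces $\Ii_{2,\zeta}$ from $\C_4$ must, in order for a braiding to survive the twist, be cohomologous to a cocycle of the form $\omega_{\xi^4}$ with $\xi^8 = 1$; by the isomorphism $\zeta \mapsto \omega_\zeta$ of Subsection \ref{pointed-bd} and the constraint $\xi^4 = \pm 1$, this is equivalent to $\zeta = \pm 1$. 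I would cross-check this against Example \ref{braided-z2n}, which exhibits the nondegenerate braiding on $\vect_{\Zz_4}^{-1}$ coming from a primitive $8$th root $\xi$ with $q(2) = -1$ — exactly reproducing the $\svect$ constraint on $\C_{ad} = \langle\delta\rangle$ and confirming that $\zeta = \pm 1$ (not $\pm\sqrt{-1}$) is the only possibility consistent with a braiding.
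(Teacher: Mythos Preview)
Your identification of the ``if'' direction and the reduction (via Remark \ref{pm1pmi}) to ruling out $\zeta = \pm\sqrt{-1}$ is correct, but the mechanism you propose for the contradiction does not close. You yourself flag the obstacle: passing from ``$\C$ is braided'' to ``$\omega_\zeta$ on $U(\C) = \Zz_4$ lies in the image of $H^3_{ab}(\Zz_4,k^\times) \to H^3(\Zz_4,k^\times)$'' is not justified, because $\Zz_4$ is the grading group, not a fusion subcategory of $\C$, so no braiding restricts to it. Your attempt to route through $\C_{pt} \cong \vect_{\Zz_2} \boxtimes \vect_{\Zz_2}^{\omega_{\zeta^2}}$ does not help either: when $\zeta^2 = -1$ this category \emph{does} admit braidings (e.g.\ $\svect \boxtimes \text{semion}$), so mere braidability of $\C_{pt}$ yields no contradiction. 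The quadratic-form bookkeeping you outline is consistent with $\zeta = \pm\sqrt{-1}$ at the level of $\C_{pt}$.

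The paper's argument is shorter and uses a different lever: primeness. When $\zeta = \pm\sqrt{-1}$, the fusion subcategory $\langle \1 \boxtimes 2\rangle \subset \Ii_{2,\zeta}$ is equivalent as a fusion category to $\vect_{\Zz_2}^{-1}$. But every braiding on $\vect_{\Zz_2}^{-1}$ is non-degenerate (by Lemma \ref{braiding-zm} the braidings correspond to primitive fourth roots $\xi$, and the bilinear form $\beta(1,1)=\xi^2=-1$ is nontrivial). Hence $\langle 2\rangle$ would be a nontrivial non-degenerate fusion subcategory of $\Ii_{2,\zeta}$, contradicting Corollary \ref{in-prime}. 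This replaces your unresolved cohomological step with a one-line appeal to a structural result already proved.
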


\begin{proof}
As observed in Remark \ref{pm1pmi}, both $\Ii_{2, 1}$ and $\Ii_{2, -1}$ admit a braiding.
	
Suppose conversely that $\Ii_{2, \zeta}$ admits a braiding. As pointed out in Remark \ref{pm1pmi}, $\zeta = \pm 1$ or $\zeta = \pm \sqrt{-1}$. If $\zeta = \pm \sqrt{-1}$, then the pointed subcategory $\langle 2\rangle$	must be equivalent as a fusion category to $\vect_{\Zz_2}^{-1}$. In particular, $\langle 2\rangle$ is non-degenerate, which contradicts the primeness of $\Ii_{2, \zeta}$ (see Corollary \ref{in-prime}). Then we get that $\zeta = \pm 1$.
\end{proof}

\begin{lemma}\label{sd-nising} Suppose that $\I_N$, $N \geq 1$, is a braided $N$-Ising fusion category such that its M\" uger center contains a fusion subcategory braided equivalent to the category $\svect$ of super-vector spaces. Then $\I_{N}$ is slightly degenerate.
\end{lemma}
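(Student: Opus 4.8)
The plan is to reduce the statement to a purely group-theoretic fact about the group of invertible objects. Set $\C = \I_N$ and write $G = G(\C) \cong \Zz_2 \times \Zz_{2^{N-1}}$ for its group of invertibles. By Lemma \ref{cad_cpt}(1) the Müger center is contained in $\C_{pt}$, so $\mathcal{Z}_2(\C) = \vect_K$ for some subgroup $K \leq G$, equipped with the restriction of the quadratic form $q$ coming from the braiding of $\C$. Since $\C$ is slightly degenerate precisely when $\mathcal{Z}_2(\C) \cong \svect$, it suffices to prove $|K| = 2$ with the nontrivial element of $K$ a fermion. The hypothesis that $\svect$ embeds in $\mathcal{Z}_2(\C)$ is exactly the assertion that $K$ contains an element $x$ of order $2$ with $q(x) = -1$, so I only need to rule out $|K| \geq 4$.

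Two facts drive the argument. First, $\delta \notin K$: this is the mechanism already used in the proof of Lemma \ref{cad_cpt}(1), since if $\delta$ lay in $\mathcal{Z}_2(\C)$ then $\C_{ad} = \langle \delta\rangle \cong \svect$ (Lemma \ref{cad}) would lie in the Müger center, and by \cite[Lemma 5.4]{muger2000galois} this would force $\delta \otimes Y \ncong Y$ for all $Y$, contradicting $\delta \otimes (Z\boxtimes 1) \cong Z\boxtimes 1$. Second, $q|_K$ is a homomorphism into $\{\pm 1\}$: because $\mathcal{Z}_2(\C)$ is symmetric, any two objects of $K$ centralize each other, so the symmetric bicharacter $\beta$ attached to $q$ is trivial on $K \times K$; then $q(gh) = q(g)q(h)$ for $g, h \in K$, and combining the quadratic form identity $q(g) = q(g^{-1})$ with $q(g^{-1}) = q(g)^{-1}$ gives $q(g)^2 = 1$.

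With these in hand I would argue that $|K| \geq 4$ is impossible, by cases on the structure of $K$ as a subgroup of $\Zz_2 \times \Zz_{2^{N-1}}$. If $K$ is cyclic of order $\geq 4$, its unique element of order $2$ is a square of an element of $K$ and hence a $q$-boson, so $K$ contains no order-$2$ fermion, contradicting the presence of $x$. If $K$ is not cyclic, then $K \cong \Zz_2 \times \Zz_{2^t}$ and the subgroup of its elements of order dividing $2$ is isomorphic to $\Zz_2 \times \Zz_2$. But the $2$-torsion of $\Zz_2 \times \Zz_{2^{N-1}}$ is itself a copy of $\Zz_2 \times \Zz_2$ containing $\delta$, and any subgroup of $G$ isomorphic to $\Zz_2 \times \Zz_2$ must coincide with this $2$-torsion; hence $\delta \in K$, contradicting the first fact. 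Therefore $|K| = 2$, $K = \langle x \rangle$, and $\mathcal{Z}_2(\C) \cong \svect$, so $\C$ is slightly degenerate.

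I expect the main obstacle to be the clean identification of $\mathcal{Z}_2(\C)$ as a pointed category carrying a $\{\pm 1\}$-valued quadratic form with $\delta$ excluded; once this set-up is in place, the exclusion of $|K| \geq 4$ is an elementary consequence of the fact that $\Zz_2 \times \Zz_{2^{N-1}}$ has a unique subgroup isomorphic to $\Zz_2 \times \Zz_2$, namely its $2$-torsion, which contains $\delta$.
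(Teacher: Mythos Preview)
Your argument is correct and shares the paper's skeleton: both proofs first note that $\mathcal{Z}_2(\C)$ is pointed (Lemma~\ref{cad_cpt}) and that $\delta \notin K$ (equivalently $\mathcal{Z}_2(\C) \cap \C_{ad} \cong \vect$), and both deduce from this that $K$ is cyclic, since any non-cyclic subgroup of $\Zz_2 \times \Zz_{2^{N-1}}$ must contain the full $2$-torsion and hence $\delta$. The difference lies in how the cyclic case is finished. The paper invokes the forward reference Lemma~\ref{pointed-svec} (a restatement of \cite[Corollary~A.19]{drinfeld2010braided}) to split $\mathcal{Z}_2(\C) \cong \svect \boxtimes \B_0$, and then observes that a cyclic $2$-group cannot have such a nontrivial direct factorization, forcing $\B_0 \cong \vect$. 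You instead argue directly: since $q$ restricted to the symmetric subcategory $K$ is a $\{\pm 1\}$-valued homomorphism, the unique involution in a cyclic $K$ of order $\geq 4$ is a square and hence a boson, contradicting the hypothesized fermion $x$. Your route is slightly more elementary and self-contained, trading the citation of Lemma~\ref{pointed-svec} for a short computation with the quadratic form; the paper's route has the advantage of packaging the splitting-off of $\svect$ as a reusable lemma that is applied again later (e.g.\ in the proof of Theorem~\ref{braided-cyclic}).
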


\begin{proof} Let $\C = \I_{N}$. Then the M\" uger center $\Z_2(\C)$ is a pointed fusion category. Since the group of invertible objects of $\C$ coincides with $\langle \delta\rangle \boxtimes \langle 2\rangle \cong \Zz_2 \times \Zz_{2^{N-1}}$ and  $\Z_2(\C) \cap \C_{ad} \cong \vect$, then the group of invertible objects of $\Z_2(\C)$ is cyclic. Combined with Lemma \ref{pointed-svec} below, the assumption implies that $\Z_2(\C) \cong \svect$ as braided fusion categories. Thus $\C$ is slightly degenerate.
\end{proof}

The next example shows that, for all $N > 2$, the categories $\Ii_{N, -1}$ admit slightly degenerate braidings.

\begin{example}\label{sd-induced} Suppose that $N > 2$. Recall from Example \ref{braided-z2n} that the fusion category $\vect_{\Zz_{2^N}}^{\zeta}$ admits a non-degenerate braiding if and only if $\zeta = -1$.
	
Consider the braiding in $\Ii \boxtimes \vect_{\Zz_{2^N}}^{-1}$ induced by any fixed braiding in $\Ii$ and a non-degenerate braiding in $\vect_{\Zz_{2^N}}^{-1}$. Then $\Ii \boxtimes \vect_{\Zz_{2^N}}^{-1}$ is non-degenerate.

Regard $\C = \Ii_{N, -1}$ as a braided fusion category with the braiding induced from $\Ii \boxtimes \vect_{\Zz_{2^N}}^{-1}$. Hence $\Z_2(\C) = \C \cap \C'$. Moreover, since $\FPdim \Ii_{N, -1} = 2^{N+1}$ and $\Ii \boxtimes \vect_{\Zz_{2^N}}^{-1}$ is non-degenerate, then $\FPdim \C' = 2$.
Since $\C$ is degenerate, then $\C' \subseteq \C$.

\medbreak
Since $\Ii$ is non-degenerate, then the non-trivial simple object of $\C'$ must be of the form $Y \boxtimes a$, where $a\in \Zz_{2^N}$ is the unique element of order $2$ and  $Y = \1$ or $Y = \delta$. Suppose that $Y = \1$. Then $\1 \boxtimes a$ centralizes $Z\boxtimes 1$ and therefore $a$ centralizes $1 \in \Zz_{2^N}$. This implies that $a$ centralizes $\vect_{\Zz_{2^N}}^{-1}$, which contradicts the non-degeneracy of $\vect_{\Zz_{2^N}}^{-1}$.
Thefore $Y = \delta$.

\medbreak
Let $q$ be the quadratic form on $\langle \delta \rangle \boxtimes \Zz_{2^{N-1}}$ associated to the induced braiding in $\C_{pt}$.
The observations in Example \ref{braided-z2n}, imply that $q(a) = 1$. Since $\delta \boxtimes 0$ is the only non-trivial object of $\C_{ad} \cong \svect$, then $q(\delta \boxtimes 0) = -1$. Using that $\delta \boxtimes 0$ centralizes $\C_{pt}$, we get that $q(\delta \boxtimes a) = q(\delta \boxtimes 0) q(\1\boxtimes a) = -1$. This implies that $\Z_2(\C) \cong \svect$. Then $\C = \Ii_{N, -1}$ is slightly degenerate.

\medbreak
If $N = 2$ then $a = 2$ and, as observed in Example \ref{braided-z2n}, $\langle a \rangle \cong \svect$. Hence $\Z_2(\Ii_{2, -1}) = \langle \delta \boxtimes a\rangle \cong \Rep \Zz_2$ is a Tannakian subcategory.

\medbreak
Observe that in these examples the pointed subcategory of $\Ii_{N, -1}$ is  $\langle \delta \rangle \boxtimes \langle 2 \rangle \cong \svect \boxtimes \vect_{\Zz_{2^{N-1}}}$.
\end{example}

\begin{lemma} Let $N > 2$. Consider a braiding in $\Ii_{N, \zeta}$ induced from a braiding in $\Ii\boxtimes \vect_{\Zz_{2^N}}^\zeta$. Then $\Ii_{N, \zeta}$ is slightly degenerate if and only if the induced braiding in $\vect_{\Zz_{2^N}}^\zeta$ is non-degenerate. If this is the case, then $\zeta = -1$.
\end{lemma}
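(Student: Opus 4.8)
The plan is to collapse the stated biconditional to the single condition $\zeta = -1$ and to read off the M\"uger center of $\C := \Ii_{N, \zeta}$ from its embedding in $\D := \Ii \boxtimes \vect_{\Zz_{2^N}}^\zeta$. First, since the induced braiding is assumed to exist, $\vect_{\Zz_{2^N}}^\zeta$ must be braided, so Lemma \ref{braiding-zm} forces $\zeta = \pm 1$. Writing the braiding on $\vect_{\Zz_{2^N}}^\zeta$ through a root $\xi$ with $\xi^{2^{N+1}} = 1$ and $\zeta = \xi^{2^N}$ as in \eqref{braiding-xi}--\eqref{sq-bd}, Example \ref{braided-z2n} shows that this braiding is non-degenerate if and only if $\xi$ is a primitive $2^{N+1}$th root of $1$. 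As $\xi^{2^N} = \zeta \in \{\pm 1\}$, primitivity of $\xi$ is equivalent to $\xi^{2^N} = -1$, that is, to $\zeta = -1$. This already yields the last assertion of the lemma (non-degeneracy of the induced braiding in $\vect_{\Zz_{2^N}}^\zeta$ forces $\zeta = -1$) and reduces the main equivalence to showing that $\C$ is slightly degenerate if and only if $\zeta = -1$.

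For the direction $\zeta = -1 \Rightarrow$ slightly degenerate I would simply invoke Example \ref{sd-induced}. Since $N > 2$ and, by Lemma \ref{braiding-zm}, every braiding on $\vect_{\Zz_{2^N}}^{-1}$ corresponds to a root $\xi$ with $\xi^{2^N} = -1$ and hence of exact order $2^{N+1}$, all such braidings are non-degenerate; thus the computation of Example \ref{sd-induced} applies verbatim to any induced braiding with $\zeta = -1$ and gives $\Z_2(\C) \cong \svect$, so $\C$ is slightly degenerate. This is precisely where $N > 2$ is used, since the $N = 2$ case of that example instead produces a Tannakian $\Rep \Zz_2$.

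The remaining, and principal, step is to prove that $\zeta = 1$ implies $\C$ is \emph{not} slightly degenerate, for every induced braiding. Here I would exhibit a nontrivial \emph{boson} in the M\"uger center, which is incompatible with $\Z_2(\C) \cong \svect$. Consider $W = \1 \boxtimes 2^{N-1}$; since $2^{N-1}$ is a nonzero even element of $\Zz_{2^N}$ (using $N \geq 2$), $W$ is a nontrivial invertible object of $\C$, lying in $\C_{pt} = \langle \delta\rangle \boxtimes \langle 2\rangle$. Because the centralizer in $\D$ of the generating object $Z \boxtimes 1$ is a fusion subcategory and $\C = \langle Z \boxtimes 1\rangle$, to verify $W \in \C \cap \C' = \Z_2(\C)$ it suffices to check that $W$ centralizes $Z \boxtimes 1$. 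As $\1$ is the unit of $\Ii$, it centralizes $Z$, so the squared braiding of $W$ and $Z \boxtimes 1$ is governed entirely by the $\vect_{\Zz_{2^N}}^\zeta$-factor and equals $\beta_\xi(2^{N-1}, 1)\,\id = \xi^{2\cdot 2^{N-1}}\,\id = \xi^{2^N}\,\id = \zeta\,\id = \id$. Hence $W \in \Z_2(\C)$. Finally, by \eqref{sq-bd} the value of the quadratic form at $W$ is $q(2^{N-1}) = \xi^{(2^{N-1})^2} = (\xi^{2^N})^{2^{N-2}} = \zeta^{2^{N-2}} = 1$ for $N \geq 2$, so $W$ is a nontrivial boson of $\Z_2(\C)$. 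Since the only boson of $\svect$ is the unit object, this shows $\Z_2(\C) \not\cong \svect$, i.e.\ $\C$ is not slightly degenerate. Combining the three steps gives $\C$ slightly degenerate $\iff \zeta = -1 \iff$ the induced braiding in $\vect_{\Zz_{2^N}}^\zeta$ is non-degenerate.

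I expect the only delicate points to be bookkeeping ones: the identification \eqref{braiding-xi}--\eqref{sq-bd} of the braiding on $\vect_{\Zz_{2^N}}^\zeta$ with a root $\xi$, and the verification that $W$ genuinely centralizes all of $\C$ rather than merely the generator. Both become routine once one observes that $\1 \in \Ii$ annihilates the $\Ii$-component of every squared braiding with $W$, so all relevant monodromy is read directly from the quadratic form of $\vect_{\Zz_{2^N}}^\zeta$; in particular, no computation of the Ising monodromy between $\delta$ and $Z$ is required in the case $\zeta = 1$.
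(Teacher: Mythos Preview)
Your argument is correct and follows essentially the same route as the paper: both identify $\1 \boxtimes 2^{N-1}$ as a nontrivial bosonic element of $\Z_2(\C)$ whenever the braiding on $\vect_{\Zz_{2^N}}^\zeta$ is degenerate (using the quadratic-form formula \eqref{sq-bd}), and both invoke Example \ref{sd-induced} for the non-degenerate direction. One small phrasing slip --- it is the centralizer of $W$, not of $Z\boxtimes 1$, that is the relevant fusion subcategory containing the generator $Z\boxtimes 1$ and hence all of $\C$ --- does not affect correctness.
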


\begin{proof} By Lemma \ref{braiding-zm}, $\zeta = \pm 1$. In view of Example \ref{braided-z2n}, it will be enough to prove the first statement. The 'if' direction was shown in Example \ref{sd-induced}. Suppose conversely that $\Ii_{N, \zeta}$ is slightly degenerate.
Note that with respect to any braiding in $\Ii \boxtimes \vect_{\Zz_{2^N}}^\zeta$, the subcategory $\1 \boxtimes \vect_{\Zz_{2^N}}^\zeta$ must centralize $\Ii \boxtimes 0$ projectively. In view of \cite[Proposition 3.32]{drinfeld2010braided}, this implies that
if $a = 2^{N-1}$ is the unique element of order 2 of $\Zz_{2^N}$, then $\1\boxtimes a$ centralizes $Z \boxtimes 0$.

If $\1\boxtimes \vect_{\Zz_{2^N}}^\zeta$ is degenerate, then its M\" uger center must contain $\1\boxtimes a$ and therefore $\1 \boxtimes a$ centralizes $Z\boxtimes 1$. Since $\1\boxtimes a \in \Ii_{N, \zeta} = \langle Z \boxtimes 1\rangle$, then $\1 \boxtimes a \in \Z_2(\Ii_{N, \zeta})$. Hence $\Z_2(\Ii_{N, \zeta}) = \langle \1 \boxtimes a\rangle$. But, from Formula \eqref{sq-bd},  $q(a) = 1$, where $q$ is the quadratic form in $\Zz_{2^N}$ corresponding to the induced braiding in $\1 \boxtimes \vect_{\Zz_{2^N}}^\zeta$. Then $\Z_2(\Ii_{N, \zeta})$ is Tannakian against the assumption.

This shows that  $\vect_{\Zz_{2^N}}^\zeta$ must be  non-degenerate and  finishes the proof of the lemma.
\end{proof}

\begin{remark}	Suppose $\C$ is a slightly degenerate $N$-Ising fusion category and $N > 2$.
We have $\C_{pt} \cong \C_{ad} \boxtimes \D$, where $\D = \langle \1 \boxtimes 2 \rangle$ is a pointed fusion category whose group of invertible objects is cyclic of order $2^{N-1}$.  This is in fact an equivalence of braided fusion categories since, by Lemma \ref{cad_cpt}, $\C_{ad}$ centralizes $\C_{pt}$. Therefore \begin{equation}\label{equiv1}\Z_2(\C_{pt}) \cong \C_{ad} \boxtimes \Z_2(\D).\end{equation}
	
On the other hand, using again Lemma \ref{cad_cpt} and \cite[Proposition 7.7]{muegerII}, we find \begin{equation}\label{equiv2}\Z_2(\C_{pt}) = \C_{ad} \vee \Z_2(\C) \cong \C_{ad} \boxtimes \Z_2(\C).\end{equation}	
	
From \eqref{equiv1} and \eqref{equiv2} we obtain that $\FPdim \Z_2(\D) = 2$. Furthermore, if $\Z_2(\D) \cong \svect$, then Lemma \ref{pointed-svec} implies that $\svect$ is a direct factor of $\D$. This is possible only if $N = 2$.
	
\medbreak Since $N > 2$, then $\Z_2(\langle \1 \boxtimes 2\rangle)$ is Tannakian of dimension $2$. Hence $\Z_2(\langle \1 \boxtimes 2\rangle) \cong \langle \1\boxtimes 2^{N-2}\rangle \cong \Rep \Zz_2$ and the non-trivial object of $\Z_2(\C)$ is $\delta \boxtimes 2^{N-2}$.
\end{remark}

\section{The structure of braided extensions of $\vect_{\Zz_2}$}\label{structure}

Suppose that $\B$ is a pointed braided fusion category. Corollary A. 19 of \cite{drinfeld2010braided} states that if the M\" uger center $\Z_2(\B)$ of $\B$ coincides with the category $\svect$ of super-vector spaces, then the M\" uger center is a direct factor of $\B$, that is, $\B \cong \svect \boxtimes \B_0$, for some pointed (necessarily non-degenerate in this case) braided fusion category $\B_0$. However, the proof of \cite[Corollary A. 19]{drinfeld2010braided} only uses the fact that $\svect \subseteq \Z_2(\B)$, in other words, it actually proves the following:

\begin{lemma}\label{pointed-svec} Let $\B$ be a pointed braided fusion category. Suppose that the M\" uger center of $\B$ contains a fusion subcategory $\D$ braided equivalent to the category $\svect$ of super-vector spaces. Then $\B \cong \D \boxtimes \B_0$, for some pointed braided fusion category $\B_0$.
\end{lemma}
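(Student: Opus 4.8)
The plan is to translate the statement into the language of pre-metric groups via the equivalence between pointed braided fusion categories and pre-metric groups recalled in Subsection \ref{pointed-bd}. Write $\B = \C(G, q)$, where $G = G(\B)$ is the group of invertible objects and $q$ is the quadratic form associated to the braiding, with symmetric bilinear form $\beta(a,b) = q(ab)q(a)^{-1}q(b)^{-1}$. Under this dictionary the M\"uger center $\Z_2(\B)$ corresponds to the radical $G^\perp = \{a \in G : \beta(a,b) = 1 \text{ for all } b \in G\}$, and the subcategory $\D \cong \svect$ corresponds to a subgroup $\langle g\rangle \cong \Zz_2$ generated by an invertible object $g$ of order $2$ with $q(g) = -1$. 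The hypothesis $\D \subseteq \Z_2(\B)$ thus reads $g \in G^\perp$. Since the tensor product $\boxtimes$ of pointed braided categories corresponds to the orthogonal direct sum of pre-metric groups, it suffices to produce an orthogonal decomposition $(G, q) = (\langle g\rangle, q|_{\langle g\rangle}) \perp (H, q|_H)$; then $\B \cong \D \boxtimes \B_0$ with $\B_0 = \C(H, q|_H)$ and $\D = \C(\langle g\rangle, q|_{\langle g\rangle}) \cong \svect$.

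The first observation is that orthogonality is automatic from $g \in G^\perp$: for any subgroup complement $H$ of $\langle g\rangle$ and any $x \in \langle g\rangle$, $h \in H$, one has $q(x + h) = q(x)\,q(h)\,\beta(x, h) = q(x)\,q(h)$, because $\langle g\rangle \subseteq G^\perp$ forces $\beta(x, h) = 1$. Hence the entire problem reduces to exhibiting a \emph{group-theoretic} complement of $\langle g\rangle$ in $G$. The crux, and the only place where the value $q(g) = -1$ is used, is the claim that $g \notin 2G$. Suppose on the contrary that $g = 2h$ for some $h \in G$; since $g$ has order $2$, $h$ has order $4$, so $\langle h\rangle \cong \Zz_4$ and $g$ is its unique involution. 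By the classification of quadratic forms on a cyclic group (formulas \eqref{sq-bd} with $M = 4$), the restriction of $q$ to $\langle h\rangle$ has the shape $q(jh) = \xi^{j^2}$ with associated $\beta(ih, jh) = \xi^{2ij}$, where $\xi = q(h)$. Comparing the two relevant values gives $q(g) = q(2h) = \xi^4 = \beta(2h, h) = \beta(g, h)$. But $g \in G^\perp$ forces $\beta(g, h) = 1$, whence $q(g) = 1$, contradicting $q(g) = -1$. Therefore $g \notin 2G$.

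Once $g \notin 2G$ is established, the complement follows from elementary linear algebra over $\mathbb{F}_2$. A splitting of $\langle g\rangle \hookrightarrow G$ amounts to a retraction, that is, a homomorphism $\phi : G \to \langle g\rangle \cong \Zz_2$ with $\phi(g) \neq 0$. Such homomorphisms factor through $G/2G$, and since $g \notin 2G$ the class $\bar g$ is nonzero in the $\mathbb{F}_2$-vector space $G/2G$; choosing a linear functional $\phi$ with $\phi(\bar g) \neq 0$ yields the desired retraction, and $H := \ker\phi$ satisfies $G = \langle g\rangle \oplus H$. Combined with the automatic orthogonality above, this produces the orthogonal decomposition of $(G, q)$ and hence the braided equivalence $\B \cong \D \boxtimes \B_0$ with $\B_0$ pointed braided, as desired.

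I expect the main obstacle to be the step $g \notin 2G$, since this is precisely where the hypothesis that $\D$ is $\svect$ rather than the Tannakian $\Rep\Zz_2$ (where one would have $q(g) = 1$) enters, and it must be extracted from a local computation of the quadratic form on a hypothetical $\Zz_4 \cong \langle h\rangle$. This is in effect the content of the argument in \cite[Corollary A.19]{drinfeld2010braided}, which as noted above only uses the inclusion $\svect \subseteq \Z_2(\B)$; the reformulation via pre-metric groups isolates exactly this point and makes the rest routine.
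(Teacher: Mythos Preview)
Your argument is correct. The paper itself does not supply an independent proof of this lemma: it simply observes, in the paragraph immediately preceding the statement, that the proof of \cite[Corollary~A.19]{drinfeld2010braided} already establishes the conclusion under the weaker hypothesis $\svect \subseteq \Z_2(\B)$, and then records the lemma as a consequence. What you have written is precisely a self-contained unpacking of that DGNO argument in the language of pre-metric groups from Subsection~\ref{pointed-bd}: reduce to finding a group-theoretic complement of $\langle g\rangle$ (orthogonality being automatic since $g\in G^\perp$), and show $g\notin 2G$ via the local computation on a hypothetical $\Zz_4\cong\langle h\rangle$. So your approach is not different from the paper's---it is the paper's cited argument, written out in full. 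One minor remark: your key identity $q(2h)=\beta(2h,h)$ can also be obtained without invoking the explicit parametrization \eqref{sq-bd}, by computing $q(h)=q(-h)=q(3h)=q(2h)\,q(h)\,\beta(2h,h)=q(g)\,q(h)$ (using $4h=0$ and $g\in G^\perp$), which gives $q(g)=1$ directly.
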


Let $\vect_{\Zz_{2M}}^{\alpha}$ be the pointed fusion category  with associativity constraint  given by the 3-cocycle $\alpha$, where $$\alpha(a, b, c) =\begin{cases}1, \; \; \qquad \qquad b+c < 2M, \\ \textrm{exp}(\frac{2i\pi a}{M}), \quad  b+c \geq 2M.\end{cases}$$

Consider the fusion category $\D_{2M}$ of $\Ii \boxtimes \vect_{\Zz_{2M}}^{\alpha}$ generated by the simple object $Z \boxtimes 1$. Let $(\D_{2M})_\E$ be the de-equivariantization of the fusion category $\D_{2M}$ by its (central) subcategory $\E$ generated by the invertible object $\delta \boxtimes M$.

\medbreak
The following result is a special instance of the classification of cyclic extensions of fusion categories of adjoint ADE type in \cite{EM}.

\begin{theorem}\label{cls-cyclic-ext}\emph{(\cite[Lemma 3.10]{EM}.)}
Up to twisting the associator by a 3-cocycle $\omega$ on $\Zz_M$, every $\Zz_M$-extension of $\vect_{\Zz_2}$, $\otimes$-generated by a simple object of Frobenius-Perron dimension less than 2, is equivalent as a fusion category to some of the categories $\C_{M}$ or, if $4$ divides $M$, to some of the categories $(\D_{2M})_\E$.
\end{theorem}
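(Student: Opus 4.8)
The plan is to recognize the statement as a classification of faithful $\Zz_M$-graded extensions of the rank $2$ pointed category, and to run the Etingof--Nikshych--Ostrik theory of group extensions on it. First I would pin down the trivial component and fusion type. Since $\C$ is $\otimes$-generated by a simple object $X$ with $\FPdim X < 2$, weak integrality together with the fact that every graded component has the same Frobenius-Perron dimension $2$ forces $\FPdim X \in \{1, \sqrt 2\}$; as $\C$ is not pointed we must have $\FPdim X = \sqrt 2$. By Theorem \ref{General_TY_Cat} this makes $\C$ a generalized Tambara--Yamagami category with trivial component $\C_0 \cong \vect_{\Zz_2} = \langle\delta\rangle$ (the twist $\vect_{\Zz_2}^{-1}$ being excluded in the non-pointed case), and in particular $M$ is even and $X\otimes X^*\cong \1\oplus\delta$.

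The heart of the argument is the homotopy-theoretic classification of \cite{ENO-homotopy}: $\Zz_M$-extensions of $\vect_{\Zz_2}$ correspond to homotopy classes of maps into the Picard categorical $2$-group of $\vect_{\Zz_2}$, whose homotopy groups are $\pi_1 = \mathrm{BrPic}(\vect_{\Zz_2}) \cong \Zz_2$, $\pi_2 = \mathrm{Inv}(\Z(\vect_{\Zz_2})) \cong \Zz_2\times\Zz_2$ (the invertibles of the toric code), and $\pi_3 = k^\times$. The condition that $\C$ be generated by a $\sqrt 2$-dimensional object forces the underlying homomorphism $\Zz_M \to \pi_1 = \Zz_2$ to be the (surjective, since $M$ is even) reduction mod $2$, whose nontrivial value is the Ising-type invertible bimodule. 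With this homomorphism fixed, the top obstruction lies in $H^4(\Zz_M, k^\times) = 0$ and so vanishes automatically, while the residual $H^3(\Zz_M, k^\times)\cong\Zz_M$ torsor is exactly the freedom of twisting the associator by a $3$-cocycle $\omega$ on $\Zz_M$ that the statement quotients out. Thus, up to such an $\omega$-twist, the classification is controlled by the obstruction $O_3 \in H^3(\Zz_M, \pi_2)$ and, when it vanishes, by a torsor over $H^2(\Zz_M, \pi_2)$.

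The dependence on whether $4\mid M$ now emerges from a single cohomological computation. The nontrivial element of $\mathrm{BrPic}(\vect_{\Zz_2})$ acts on $\pi_2 \cong \Zz_2\times\Zz_2$ by the electromagnetic duality that swaps its two order-$2$ generators; hence, as a $\Zz_M$-module via the reduction $\Zz_M\to\Zz_2$, one has $\pi_2 \cong \mathbb{F}_2[\Zz_2] \cong \mathrm{Ind}_{\Zz_{M/2}}^{\Zz_M}\mathbb{F}_2$, the permutation module on the two cosets. By Shapiro's lemma, $H^n(\Zz_M, \pi_2) \cong H^n(\Zz_{M/2}, \mathbb{F}_2)$, which equals $\Zz_2$ when $M/2$ is even (that is, $4\mid M$) and vanishes when $M/2$ is odd, for both $n = 2$ and $n = 3$. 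Consequently, when $M\equiv 2\pmod 4$ the torsor over $H^2$ is a single point and there is a unique category up to $\omega$-twist, which I would identify with $\C_M$; when $4\mid M$, one checks $O_3 = 0$ (existence being guaranteed by exhibiting $\C_M$ itself) and the torsor has exactly two classes.

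It remains to realize the two classes by the stated constructions. The categories $\C_M\subseteq \Ii\boxtimes\vect_{\Zz_M}$ visibly provide one class with the correct fusion rules \eqref{fr-cm}. For the second class, available only when $4\mid M$, I would use $(\D_{2M})_\E$: the cocycle $\alpha$ is engineered so that $\delta\boxtimes M$ has trivial self-statistics, hence generates a Tannakian copy of $\Rep\Zz_2$ in the center of $\D_{2M}$ that can be de-equivariantized, and the resulting $\Zz_M$-extension of $\vect_{\Zz_2}$ represents the nontrivial $H^2(\Zz_M,\pi_2)$-class. The main obstacle is precisely this last matching: one must verify $O_3 = 0$, identify the invariant in $H^2(\Zz_M, \pi_2)$ that distinguishes the two families, and confirm that $(\D_{2M})_\E$ realizes the nontrivial class rather than an $\omega$-twist of $\C_M$. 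This bookkeeping---tracking the swap action and pairing the abstract torsor against the explicit associators of the two constructions---is the technical core carried out in \cite{EM}.
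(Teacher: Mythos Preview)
The paper does not prove this statement: Theorem~\ref{cls-cyclic-ext} is quoted verbatim as \cite[Lemma 3.10]{EM} and used as a black box, with no argument given. So there is no ``paper's own proof'' to compare against; what you have written is an attempted reconstruction of the argument in the cited reference.

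As a sketch of the Etingof--Nikshych--Ostrik approach underlying \cite{EM}, your outline is broadly on target. The identification $\mathrm{BrPic}(\vect_{\Zz_2})\cong\Zz_2$ via braided autoequivalences of the toric code, the swap action on $\pi_2\cong\Zz_2\times\Zz_2$, and the Shapiro computation giving $H^n(\Zz_M,\pi_2)\cong H^n(\Zz_{M/2},\mathbb{F}_2)$ are all correct, and they do explain cleanly why a second family appears exactly when $4\mid M$. You are also right that existence of $\C_M$ forces the obstruction $O_3$ to vanish. The honest gap you flag yourself---matching the abstract nontrivial $H^2$-class to the concrete de-equivariantization $(\D_{2M})_\E$ and checking it is not merely an $\omega$-twist of $\C_M$---is indeed where the real work sits; distinguishing the two families requires comparing an invariant (for instance, the isomorphism type of the group of invertible objects, cyclic of order $M$ for $(\D_{2M})_\E$ versus $\Zz_2\times\Zz_{M/2}$ for $\C_M$, as the paper exploits in the proof of Theorem~\ref{braided-cyclic}). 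Without that identification your argument yields only ``at most two classes up to twist,'' not the stated classification.
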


As an application of Theorem \ref{cls-cyclic-ext}, we obtain:

\begin{theorem}\label{braided-cyclic} Let $\C$ be a non-pointed braided fusion category and suppose that $\C$ is a $\Zz_M$-extension of the fusion category $\vect_{\Zz_2}$. Then $\C$ is equivalent as a fusion category to $\C_{M}^\omega$, for some 3-cocycle $\omega$ on $\Zz_M$.
\end{theorem}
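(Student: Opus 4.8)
The plan is to apply the classification of Theorem~\ref{cls-cyclic-ext} and then eliminate the family $(\D_{2M})_\E$ using the constraints a braiding imposes on the group of invertible objects. First I would verify the hypothesis of Theorem~\ref{cls-cyclic-ext}, namely that $\C$ is $\otimes$-generated by a simple object of Frobenius-Perron dimension less than $2$. Since $\C$ is a non-pointed $\Zz_M$-extension of $\vect_{\Zz_2}$, Theorem~\ref{General_TY_Cat} shows that $\C$ is a generalized Tambara-Yamagami category with $\C_{ad}\cong\vect_{\Zz_2}$, $U(\C)=\Zz_M$ and $\cd(\C)=\{1,\sqrt2\}$; in particular $M$ is even and $\C$ has $M/2$ non-invertible simple objects. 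The degree homomorphism $G(\C)\to\Zz_M$ has kernel $\{\1,\delta\}$, so its image is the unique subgroup $2\Zz_M$ of index $2$; hence all invertible objects sit in even-degree components and each odd-degree component contains exactly one $\sqrt2$-dimensional simple object. Taking such an object $X$ in the degree-$1$ component, the subcategory $\langle X\rangle$ meets every component (since $\deg(X^{\otimes k})=k$) and contains $\delta$ (from $X\otimes X^*\cong\1\oplus\delta$), so its trivial component equals $\C_e$; comparing Frobenius-Perron dimensions yields $\langle X\rangle=\C$. Theorem~\ref{cls-cyclic-ext} then gives $\C\cong\C_M^\omega$ for some $3$-cocycle $\omega$, or, when $4\mid M$, $\C\cong(\D_{2M})_\E^\omega$.

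To finish it suffices to exclude the possibility $\C\cong(\D_{2M})_\E^\omega$ with $4\mid M$. I would first identify the group of invertible objects of $(\D_{2M})_\E$. As $\D_{2M}$ shares the fusion rules of $\C_{2M}$, its invertibles form $\langle\delta\rangle\times\langle\1\boxtimes2\rangle\cong\Zz_2\times\Zz_M$, and de-equivariantizing by the order-$2$ object $\delta\boxtimes M$ identifies each invertible $g$ with $g\otimes(\delta\boxtimes M)$. Writing $\delta\boxtimes M$ as the element $(1,M/2)$ of $\Zz_2\times\Zz_M$, one checks that the resulting quotient is cyclic of order $M$; thus $G((\D_{2M})_\E)\cong\Zz_M$, which is not isomorphic to $G(\C_M)\cong\Zz_2\times\Zz_{M/2}$ precisely because $M/2$ is even. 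Since a $3$-cocycle twist leaves the fusion ring unchanged, any $\C\cong(\D_{2M})_\E^\omega$ still has $G(\C)\cong\Zz_M$.

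The crux, and the step I expect to be the main obstacle, is to show that no such $\C$ can be braided. Assume $\C$ is braided. By Lemma~\ref{cad}, $\C_{ad}=\langle\delta\rangle\cong\svect$ as braided categories, so the order-$2$ object $\delta$ satisfies $q(\delta)=-1$ for the quadratic form $q$ attached to the induced braiding on $\C_{pt}$; and by Lemma~\ref{cad_cpt}(1), $(\C_{ad})'=\C_{pt}$, so $\delta$ centralizes every invertible object, i.e.\ $\delta$ lies in the radical of the associated bilinear form $\beta$. A braiding would therefore furnish a quadratic form $q$ on $G(\C)\cong\Zz_M$ whose unique element of order $2$, namely $\delta=M/2$, is simultaneously a radical element and satisfies $q(\delta)=-1$. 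Writing $q(j)=\xi^{j^2}$ with $\xi^{2M}=1$ as in \eqref{sq-bd}, the radical condition $\beta(M/2,1)=\xi^{M}=1$ forces $q(M/2)=\xi^{M^2/4}=(\xi^M)^{M/4}=1$ (here $4\mid M$ is used), contradicting $q(\delta)=-1$. Hence the case $\C\cong(\D_{2M})_\E^\omega$ cannot occur, and we conclude $\C\cong\C_M^\omega$. The remaining ingredients, the generation argument and the dimension counts, are routine; the content lies in pinning down $G((\D_{2M})_\E)$ and in the incompatibility of the radical-plus-$q(\delta)=-1$ condition with a cyclic group of invertibles when $4\mid M$.
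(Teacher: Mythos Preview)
Your proof is correct and follows the same overarching strategy as the paper: verify that $\C$ is $\otimes$-generated by a $\sqrt{2}$-dimensional simple, invoke Theorem~\ref{cls-cyclic-ext}, compute that $G((\D_{2M})_\E)\cong\Zz_M$ is cyclic, and then show this is incompatible with a braiding when $4\mid M$. However, two steps are carried out differently. For generation, the paper simply cites \cite[Theorem~4.7]{natale2013faithful} to obtain a faithful non-invertible simple object, whereas you argue directly via the degree map $G(\C)\to\Zz_M$; your argument is self-contained but the cited result gives a one-line shortcut. More substantively, for the contradiction the paper appeals to Lemma~\ref{pointed-svec} (the splitting-off of $\svect$): since $\C_{ad}\cong\svect$ centralizes $\C_{pt}$, one gets $\C_{pt}\cong\svect\boxtimes\B$, and cyclicity of $G(\C)\cong\Zz_M$ forces $|\B|$ odd, hence $M/2$ odd, contradicting $4\mid M$. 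Your argument instead works directly with the quadratic form $q(j)=\xi^{j^2}$ on $\Zz_M$: the radical condition $\xi^M=1$ together with $4\mid M$ forces $q(M/2)=(\xi^M)^{M/4}=1$, contradicting $q(\delta)=-1$. Your route is more elementary and avoids Lemma~\ref{pointed-svec} entirely; the paper's route is more structural and explains conceptually why the obstruction is a parity phenomenon.
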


\begin{proof} By assumption the braided fusion category $\C$ is nilpotent. Since $\C$ is not pointed, then $\C_{ad} \cong \vect_{\Zz_2}$ and therefore $U(\C) \cong \Zz_M$. Then \cite[Theorem 4.7]{natale2013faithful} implies that $\C$ has a faithful simple object $X$ and in addition $X$ is not invertible. Since the homogeneous components of the $\Zz_M$-grading of $\C$ have dimension $2$, then $\FPdim X = \sqrt 2$ (see Theorem \ref{General_TY_Cat}). Hence $\C$ is $\otimes$-generated by a simple object of Frobenius-Perron dimension less than 2.

\medbreak
In view of  Theorem \ref{cls-cyclic-ext} we may assume that $\C$ is equivalent to a 3-cocycle twist of one of the fusion categories $(\D_{2M})_\E$, where $M$ is divisible by $4$.

Note that the equivariantization functor $F: \D_{2M} \to (\D_{2M})_\E$ takes a simple object of Frobenius-Perron dimension $\sqrt{2}$ of $\D_{2M}$ to a simple object (of the same Frobenius-Perron dimension) of $(\D_{2M})_\E$. Then $F$  induces a surjective group homomorphism $G(\D_{2M}) \to G((\D_{2M})_\E)$ whose kernel is the subgroup $\langle\delta \boxtimes M \rangle$ generated by $\delta \boxtimes M$. Hence we obtain a group isomorphism $G((\D_{2M})_\E) \cong G(\D_{2M})/\langle\delta \boxtimes M \rangle$. But $G(\D_{2M}) = \langle \delta \rangle \boxtimes \langle 2 \rangle$, so that $G((\D_{2M})_\E) \cong \Zz_M$ is cyclic of order $M$.

Then the group of invertible objects of $\C$ is cyclic of order $M$.
Since $\C$ is not pointed, then $\C$ has generalized Tambara-Yamagami fusion rules.
Then the group of invertible objects of $\C$, being cyclic, must contain a unique subgroup of order $2$. This subgroup is necessarily the group of invertible objects of the adjoint subcategory $\C_{ad} \cong \vect_{\Zz_2}$.

By Lemmas \ref{cad} and \ref{cad_cpt},
$\C_{ad} \cong \svect$ as braided fusion categories and $\Z_2(\C_{pt}) = \C_{ad} \vee \Z_2(\C)$. Then, by Lemma \ref{pointed-svec}, $\C_{pt} \cong \C_{ad} \boxtimes \B$, for some pointed fusion category $\B$. Since $G(\C)$ is cyclic, we obtain that  $\B$ has odd dimension $n$. This implies that $M/2 = n$ is odd, agains the assumption.
The proof of the theorem is now complete.  \end{proof}

\begin{remark} The proof of Theorem \ref{braided-cyclic} shows that (twistings of) the fusion categories $(\D_{2M})_\E$ are not braided unless $M/2$ is odd, in which case they are equivalent to a twisting of the fusion category $\C_M$.
When $M = 4$, $(\D_{2M})_\E$ has Fermionic Moore-Reed fusion rules. It is known that there are four fusion categories admitting these fusion rules and none of them is braided; see \cite{Bonderson2007}, \cite{liptrap2010generalized}.
\end{remark}

\medbreak
The following is the main result of this section:

\begin{theorem}\label{cls-extensions-vec2} Let $\C$ be a non-pointed braided fusion category and suppose that $\C$ is an extension of a rank 2 pointed fusion category. Then $\C$ is equivalent as a fusion category to $\I_{N} \boxtimes \B$, for some $N\geq 1$, where $\I_N$ is a braided $N$-Ising fusion category, and $\B$ is a pointed braided fusion category.
Furthermore, the categories $\I_{N}$ and $\B$ projectively centralize each other in $\C$.
\end{theorem}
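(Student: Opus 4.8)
The plan is to reduce the general situation to the cyclic one handled by Theorem \ref{braided-cyclic} and Theorem \ref{fact-cm}, and then to peel off a pointed braided Deligne factor. First I would record the basic structure. Since $\C$ is non-pointed and an extension of a rank $2$ pointed fusion category, Corollary \ref{necessary_sufficent} gives $\cd(\C)=\{1,\sqrt 2\}$, and Theorem \ref{General_TY_Cat} shows that $\C$ is a generalized Tambara--Yamagami category with $\C_{ad}\cong\vect_{\Zz_2}=\langle\delta\rangle$ and $U(\C)$ of order $2n=\FPdim(\C)/2$. Because $\C$ is braided its group of invertible objects $G(\C)$ is abelian, and by Lemma \ref{cad} the adjoint subcategory $\C_{ad}$ is braided equivalent to $\svect$.

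Next I would split the pointed subcategory. By Lemma \ref{cad_cpt}(2) we have $\C_{ad}\subseteq\Z_2(\C_{pt})$, and since $\C_{ad}\cong\svect$ by Lemma \ref{cad}, Lemma \ref{pointed-svec} applies to the pointed braided category $\C_{pt}$ with $\D=\C_{ad}$ and yields a braided factorization $\C_{pt}\cong\C_{ad}\boxtimes\B_0$, where $\B_0$ is pointed braided of Frobenius--Perron dimension $n$; thus $G(\C)\cong\langle\delta\rangle\times G(\B_0)$ and $\delta$ centralizes $\B_0$. This produces the candidate for the pointed factor, but only inside $\C_{pt}$.

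The heart of the argument is to promote this to a splitting of $\C$ itself. Here I would fix a simple object $X$ of dimension $\sqrt 2$ and consider the fusion subcategory $\langle X\rangle$ it generates. Since $X$ is faithful in $\langle X\rangle$, every simple object of $\langle X\rangle$ has degree in the cyclic subgroup generated by $\deg X$, so $U(\langle X\rangle)$ is cyclic and $\langle X\rangle$ is a braided $\Zz_M$-extension of $\vect_{\Zz_2}$. By Theorem \ref{braided-cyclic} it is equivalent to $\C_M^\omega=\C_{M,\zeta}$, and writing $M=2^N m$ with $m$ odd and splitting the $3$-cocycle across the coprime factors as in Theorem \ref{fact-cm} exhibits inside $\C$ an $N$-Ising subcategory $\Ii_{N,\zeta_1}$ (braided, being a fusion subcategory of $\C$; the odd factor $\vect_{\Zz_m}$ is forced to be untwisted since a pointed braided category of odd order $m$ admits no nontrivial twist). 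Choosing $X$ so that $N$ is maximal, I would then argue that this $\Ii_{N,\zeta_1}$ is the $\I_N$ of the statement and is a Deligne factor of $\C$. The point is that $\C_{ad}\cong\svect$ has rank $2$, so $\C$ carries a single ``Ising block''; concretely I would show that the cyclic group $\langle a\rangle=G(\Ii_{N,\zeta_1})/\langle\delta\rangle$ is an orthogonal direct summand of the pre-metric group $(G(\B_0),q)$ of $\B_0$, and that the remaining $\sqrt2$-objects distribute as tensor products of objects of $\Ii_{N,\zeta_1}$ with invertibles of the complement, giving $\C\cong\Ii_{N,\zeta_1}\boxtimes\B$. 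I expect this last step to be the main obstacle: because the $N$-Ising category is degenerate for $N\ge 2$ (its M\"uger center is nontrivial, as in Example \ref{sd-induced}), M\"uger's Theorem \ref{MugerThm} cannot be used to split $\C$ as $\Ii_{N,\zeta_1}\boxtimes(\Ii_{N,\zeta_1})'$ directly. Instead one must combine the metric-group decomposition of $\C_{pt}$ above with the classification of cyclic extensions of \cite{EM} (Theorem \ref{cls-cyclic-ext}), which is exactly what is needed to control both the fusion rules and the associator on the $\sqrt 2$-objects simultaneously.

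Finally, once $\C\cong\I_N\boxtimes\B$ is established as fusion categories, the projective centralization is immediate and requires no further input. Realizing the factors as fusion subcategories of $\C$, every simple object of $\C$ is isomorphic to $U\otimes V$ with $U\in\Irr(\I_N)$ and $V\in\Irr(\B)$, and each such $U\otimes V$ is simple; hence for all simple $U\in\I_N$ and $V\in\B$ the double braiding $c_{V,U}c_{U,V}$ is an endomorphism of the simple object $U\otimes V$ and is therefore a scalar multiple of $\id_{U\otimes V}$. Thus $\I_N$ and $\B$ projectively centralize each other, which completes the proof.
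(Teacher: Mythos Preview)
Your outline correctly identifies the difficulty (for $N\ge 2$ the $N$-Ising subcategory is degenerate, so Theorem \ref{MugerThm} does not split it off) and the target (a Deligne factorization $\C\cong\I_N\boxtimes\B$), but the central step is not actually carried out. You write that you ``would show'' that $\langle a\rangle$ is an orthogonal summand of the pre-metric group of $\B_0$ and that the $\sqrt 2$-objects then distribute accordingly, but you give no argument for either assertion. A metric-group decomposition of $\C_{pt}$ only controls the braided structure on invertible objects; it says nothing about the associator on the $\sqrt 2$-objects or about how an arbitrary non-invertible simple decomposes as a product of something in $\langle X\rangle$ and something in the complement. Your ``maximal $N$'' choice of $X$ is also unmoored: nothing you wrote forces $\langle X\rangle\cap\B_0=\vect$ or $\langle X\rangle\vee\B_0=\C$, and a dimension count shows your $\B_0$ (of dimension $n$) is not even the right size to be the complement of $\I_N$ unless $N=1$. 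So as written the proof has a genuine gap precisely at the point you flag as ``the main obstacle.''

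The paper closes this gap by a different, concrete mechanism. It fixes the invariant-factor decomposition $U(\C)\cong\Zz_{d_1}\oplus\cdots\oplus\Zz_{d_r}$ with $d_j\mid d_{j+1}$, chooses the canonical generators $e_i$, and separates them according to whether $\C_{e_i}$ carries a $\sqrt 2$-object or two invertibles. It then picks $Z=Z_{i_1}$ (the non-invertible simple in the \emph{first} such component, not one maximizing $N$), uses transitivity (Corollary \ref{transitive-action}) to obtain invertibles $g_\ell$ with $g_\ell\otimes Z\cong Z_{i_\ell}$, and lets $\tilde\B$ be the pointed subcategory they and the $a_{j_s},b_{j_s}$ generate. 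After applying Lemma \ref{pointed-svec} to split $\tilde\B\cong\C_{ad}\boxtimes\B_0$, the divisibility relations $d_{i_1}\mid d_{i_2}\mid\cdots$ are exactly what force any simple of $\C$ to factor \emph{uniquely} as $X\otimes g$ with $X\in\langle Z\rangle$ and $g\in\B_0$. This uniqueness is the key input to \cite[Theorem 3.8]{gelaki2017exact}, which upgrades the factorization of simples to an exact factorization $\C=\langle Z\rangle\bullet\B_0$; since $\C$ is braided, \cite[Corollary 3.9]{gelaki2017exact} then gives simultaneously the Deligne product $\C\cong\langle Z\rangle\boxtimes\B_0$ and the projective centralization. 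Only after this does one invoke Theorems \ref{braided-cyclic} and \ref{fact-cm} on the cyclic factor $\langle Z\rangle$. Your final paragraph on projective centralization is correct once the Deligne factorization is in hand, but the factorization itself needs the exact-factorization argument above (or an equivalent), not just a pre-metric group splitting.
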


\begin{proof} Let $U(\C)$ be the universal grading group of $\C$, denoted additively. Then $U(\C)$ is an Abelian group and $\C = \bigoplus_{a \in U(\C)} \C_a$, with $\C_0 = \C_{ad} \cong \vect_{\Zz_2}$. Then $\C_{ad} \cong \svect$ as braided fusion categories. We shall denote by $\delta$ the unique non-invertible simple object of $\C_{ad}$.

Let us identify $U(\C)$ with a direct sum of cyclic groups $\Zz_{d_1} \oplus \dots \oplus \Zz_{d_r}$, where the integers $2\leq d_1, \dots, d_r$ are such that
$d_j\vert d_{j+1}$, for all $j = 1, \dots, r-1$.
Let $e_i\in U(\C)$, $1\leq i \leq r$, be the canonical generators: $e_i$ has 1 in the $i$th component and 0 in the remaining components.

For each $1\leq i \leq r$, let $\C_{e_i}$ be the homogeneous component of degree $e_i$ of $\C$.
Write the set $\{1, \dots, r\}$ as a disjoint union $\{i_1, \dots, i_p\} \cup \{j_1, \dots, j_q\}$, where $p+q = r$ and the indices $i_1, \dots, i_p, j_1, \dots, j_q$ are such that
\begin{equation}\label{desigualdades} i_1\leq \dots \leq i_p, \quad  j_1\leq \dots \leq j_q, \end{equation}
and the homogeneous components $\C_{e_{i_\ell}}$, $1\leq \ell \leq p$, contain a non-invertible simple object $Z_{i_\ell}$, and the components $\C_{e_{j_s}}$, $1\leq s \leq q$, contain two non-isomorphic invertible objects $a_{j_s}$ and $b_{j_s}$.

\begin{claim}\label{claim} The $p + 2q$ simple objects
\begin{equation}\label{generators} Z_{i_1}, \dots, Z_{i_p}, a_{j_1}, b_{j_1}, \dots, a_{j_q}, b_{j_q},\end{equation} generate the fusion category $\C$.
\end{claim}

\begin{proof}[Proof of the claim] Let $X$ be a simple object of $\C$ and suppose that $X \in \C_a$, $a \in U(\C)$. Since $e_1, \dots, e_r$ generate $U(\C)$, then $a = {t_1}e_1 + \dots +{t_r} e_r$, for some non-negative integers $t_1, \dots, t_r$. Then the tensor product
\begin{equation}\label{tensor-product}Z_{i_1}^{\otimes t_{i_1}}\otimes \dots \otimes Z_{i_p}^{\otimes t_{i_p}} \otimes x_{j_1}^{t_{j_1}} \dots x_{j_q}^{t_{j_q}}\end{equation}
belongs to $\C_a$, where, for all $1\leq s \leq q$, $x_{j_s} = a_{j_s}$ or $b_{j_s}$.

\medbreak
If $X$ is the unique simple object of $\C_a$ up to isomorphism, then the tensor product \eqref{tensor-product} must be isomorphic to a direct sum of copies of $X$ and in particular $X$ is a simple constituent of \eqref{tensor-product} and therefore it belongs to the fusion subcategory generated by \eqref{generators}.

Note in addition that such a non-invertible simple object $X$ of $\C$ must exist, because $\C$ is not pointed. Thus if $t_1, \dots, t_r$ are chosen as above, then \eqref{tensor-product} does not contain any invertible  constituent. Hence some of the tensor factors in \eqref{generators} must be non-invertible, that is, $p \geq 1$. Since $Z_{i_1} \otimes Z_{i_1}^* \cong \1 \oplus \delta$, then we find that $\delta$ belongs to the fusion subcategory generated by \eqref{generators}.

\medbreak
Suppose next that the simple object $X \in \C_a$ is invertible. Then the only simple objects of $\C_a$ are, up to isomorphism, $X$ and $\delta X$. Also in this case, at least one of the objects $X$ or $\delta X$ is a simple constituent of \eqref{tensor-product} and therefore it belongs to the fusion subcategory generated by \eqref{generators}. Then so does the other, because $\delta$ belongs to this subcategory. This proves the claim.
\end{proof}

By Corollary \ref{transitive-action}~(1), the action of the group of invertible objects of $\C$ on the isomorphism classes of non-invertible simple objects is transitive. Then, for all $1\leq \ell \leq p$, $$Z_{i_1} \otimes Z_{i_\ell}^* \cong g_{\ell} \oplus \delta g_{\ell},$$ where $\1 \neq g_\ell$ is an invertible object such that \begin{equation}\label{transitive}g_\ell \otimes Z_{i_1} \cong Z_{i_\ell}.\end{equation} In particular $g_1 = \delta$. Then $g_\ell$ and $\delta g_\ell$ are, up to isomorphism, the unique simple objects of $\C_{e_{i_1} - e_{i_\ell}}$.

\medbreak
Let $\tilde\B$ be the pointed fusion subcategory of $\C$ generated by the invertible objects
\begin{equation}\label{gen-b}a_{j_1}, b_{j_1}, \dots, a_{j_q}, b_{j_q}, g_1, g_2, \dots, g_p. \end{equation}

Since $\delta = g_1$ generates $\C_{ad}$, then $\svect \cong \C_{ad} \subseteq \tilde\B$. But by Lemma \ref{cad_cpt},
$\C_{ad}$ centralizes $\tilde\B$. Lemma \ref{pointed-svec} implies that $\tilde\B \cong \C_{ad} \boxtimes \B_0$ for some pointed fusion category $\B_0$. Note that the degree of homogeneity $b$ of a simple object of $\B_0$ is of the form
\begin{align*}b & = s_2(e_{i_1} - e_{i_2}) + \dots + s_p(e_{i_1} - e_{i_p})+ n_1e_{j_1} + \dots + n_qe_{j_q} \\ & = he_{i_1}
- s_2e_{i_2} - \dots - s_p e_{i_p} + n_1e_{j_1} + \dots + n_qe_{j_q},\end{align*} for some non-negative integers $s_2, \dots, s_p, n_1,\dots, n_q$, where $h = s_2 + \dots + s_p$.

\medbreak
Let $Z = Z_{i_1}$. Relation \eqref{transitive} and Claim \ref{claim} imply that the fusion subcategory $\langle Z\rangle$ generated by $Z$ and $\B_0$ generate $\C$. By commutativity of the fusion rules of $\C$, we obtain that every simple object $Y$ of $\C$ decomposes in the form \begin{equation}\label{factoriz-simple}Y \cong X \otimes g,\end{equation} for some simple object $X$ of $\langle Z \rangle$ and some invertible object $g \in \B_0$.

\medbreak
Suppose that $X, X' \in \langle Z \rangle$ and $g, g' \in \B_0$ are simple objects such that
\begin{equation}\label{iso} X\otimes g \cong X'\otimes g'.
\end{equation}

Then $X \otimes g(g')^{-1} \in \langle Z\rangle$ and thus $g(g')^{-1}$ is a simple constituent of $Z^{\otimes m}$, for some $m \geq 0$. In particular, $g(g')^{-1}$ is homogeneous of degree ${me_{i_1}}$.

On the other hand, $g(g')^{-1} \in \B_0$. Then
$$me_{i_1} = he_{i_1}
- s_2e_{i_2} - \dots - s_p e_{i_p} + n_1e_{j_1} + \dots + n_qe_{i_q},$$
for some non-negative integers $s_2, \dots, s_p, n_1,\dots, n_q$, and $h = s_2 + \dots + s_p$.
Therefore $d_{i_2}\vert s_2, \dots, d_{i_p}\vert s_p$ and $d_{j_1}\vert n_1, \dots, d_{j_q}\vert n_q$.
From condition \eqref{desigualdades}, we have that $d_{i_1}\vert d_{i_2}\vert \dots \vert d_{i_p}$. Hence $d_{i_1}\vert h$ and $g(g')^{-1} \in \C_0 = \C_{ad}$. Therefore $g(g')^{-1} \cong  \1$, by the definition of $\B_0$. Then $g \cong g'$ and also $X \cong X'$, by \eqref{iso}.

\medbreak We have thus shown that the factorization \eqref{factoriz-simple} of a simple object of $\C$ is unique up to isomorphism. By \cite[Theorem 3.8]{gelaki2017exact}, $\C$ is an exact factorization $\C = \langle Z\rangle \bullet \B_0$.  Since $\C$ is braided, then $\C \cong \langle Z \rangle \boxtimes \B_0$ as fusion categories and the categories $\langle Z\rangle$ and $\B_0$ projectively centralize each other, by \cite[Corollary 3.9]{gelaki2017exact}.
Since $\langle Z \rangle$ is a cyclic extension of $\vect_{\Zz_2}$, then Theorems \ref{braided-cyclic} and \ref{fact-cm} imply that $\langle Z \rangle \cong \Ii_{N, \zeta} \boxtimes \D$, for some $N \geq 1$, where $\zeta$ is a $2^N$th root of 1, and $\D$ is a pointed braided fusion category, such that $\Ii_{N, \zeta}$ and $\D$ centralize each other. Letting $\B = \D \boxtimes \B_0$, we obtain the theorem.
\end{proof}

\section*{Acknowledgements}
J. Dong is partially supported by the startup foundation for introducing talent of NUIST (Grant No. 2018R039) and the Natural Science Foundation of China (Grant No. 11201231). S. Natale is partially supported by CONICET and Secyt-UNC. The work of S. Natale was done in part during visits to NUIST in Nanjing, and ECNU in Shanghai; she thanks both mathematics departments for the outstanding hospitality.


\begin{thebibliography}{10}
	\expandafter\ifx\csname url\endcsname\relax
	\def\url#1{\texttt{#1}}\fi
	\expandafter\ifx\csname urlprefix\endcsname\relax\def\urlprefix{URL }\fi

\bibitem{Bonderson2007}
P.~Bonderson, Non-abelian anyons and interfermetry, PhD thesis, California
Institute of Technology, 2007.

\bibitem{bruguieres2011exact}
A.~Brugui{\`e}res, S.~Natale, Exact sequences of tensor categories, Internat.
Math. Res. Notices 2011~(24) (2011) 5644--5705.

\bibitem{deligne1990categories}
P.~Deligne, Cat{\'e}gories {T}annakiennes, in: The Grothendieck Festschrift,
Springer, 1990, pp. 111--195.

\bibitem{drinfeld2010braided}
V.~Drinfeld, S.~Gelaki, D.~Nikshych, V.~Ostrik, On braided fusion categories
{I}, Selecta Math., New Ser. 16~(1) (2010) 1--119.

\bibitem{etingof2005fusion}
P.~Etingof, D.~Nikshych, V.~Ostrik, On fusion categories, Ann. Math. 162~(2)
(2005) 581--642.

\bibitem{ENO-homotopy}  P. Etingof, D. Nikshych, V. Ostrik,
Fusion categories and homotopy theory, Quantum Topol. 1 (2010), 209--273.

\bibitem{EM} Cain Edie-Michel, Classifying fusion categories $\otimes$-generated by an object of small Frobenius-Perron dimension, preprint \texttt{arXiv:1810.057117}.

\bibitem{gelaki2017exact}
S.~Gelaki, Exact factorizations and extensions of fusion categories, J.  Algebra 480 (2017) 505--518.

\bibitem{gelaki2008nilpotent}
S.~Gelaki, D.~Nikshych, Nilpotent fusion categories, Adv. Math. 217~(3) (2008)
1053--1071.

\bibitem{2016GonNatale}
M.~E. Gonz{\'a}lez, S.~Natale, On fusion rules and solvability of a fusion
category, J. Group Theory 20 (2017) 133--167.

\bibitem{jordan2009classification}
D.~Jordan, E.~Larson, On the classification of certain fusion categories, J.
Noncommut. Geom. 3~(3) (2009) 481--499.

\bibitem{JS} A. Joyal, R. Street, Braided tensor categories. Adv. Math. 102 (1993) 20--78.

\bibitem{liptrap2010generalized}
J.~Liptrap, Generalized {T}ambara-{Y}amagami categories, preprint
arXiv:1002.3166.

\bibitem{muger2000galois}
M.~M{\"u}ger, Galois theory for braided tensor categories and the modular
closure, Adv. Math. 150~(2) (2000) 151--201.

\bibitem{muegerII} M. M\" uger, From subfactors to categories and topology II: the quantum double of tensor categories and subfactors, J. Pure Appl. Algebra 180 (2003) 159--219 .

\bibitem{muger2003structure}
M.~M{\"u}ger, On the structure of modular categories, Proc. London Math. Soc.
87~(02) (2003) 291--308.

\bibitem{natale2013faithful}
S.~Natale, Faithful simple objects, orders and gradings of fusion categories,
Algebr. Geom. Topol. 13 (2013) 1489--1511.

\bibitem{quinn} F. Quinn, Group categories and their field theories. In: Proceedings of the Kirbyfest (Berkeley, CA,
1998), pp. 407--453 (electronic), Geom. Topol. Monogr., 2, Geom. Topol. Publ., Coventry (1999).

\bibitem{siehler-braided}
J.~Siehler, Braided near-group categories, preprint \texttt{arXiv:math/0011037} (2000).

\bibitem{TY} D. Tambara, S. Yamagami, Tensor categories with fusion rules
of self-duality for finite abelian groups, J. Algebra 209 (1998)
692--707.

\end{thebibliography}
\end{document}